\newcommand{\R}{{\mathbb R}}
\newcommand{\N}{{\mathbb N}}
\newcommand{\+}{\hspace{0.1em}}
\newcommand{\Vk}{V\!\wedge k}
\newcommand{\scpr}[2]{\bigl\langle #1,#2\bigr\rangle}
\newcommand{\calH}{\mathcal{H}}
\newcommand{\calA}{\mathcal{A}}
\newcommand{\calB}{\mathcal{B}}
\newcommand{\eps}{{\varepsilon}}
\DeclareSymbolFont{bbold}{U}{bbold}{m}{n}
\DeclareSymbolFontAlphabet{\mathbbold}{bbold}
\newcommand{\ind}{\mathbbold{1}}
\newcommand{\from}{\colon}
\newcommand{\set}[2]{\bigl\{#1\bigm|#2\bigr\}}
\renewcommand\le{\leqslant}
\renewcommand\ge{\geqslant}
\renewcommand{\b}{{\rm b}}
\renewcommand{\c}{{\rm c}}
\DeclareMathOperator{\dist}{dist}
\DeclareMathOperator{\spt}{spt}
\newcommand{\support}{\spt}
\newtheorem{lemma}{Lemma}[section]
\newtheorem{theorem}[lemma]{Theorem}
\newtheorem{coro}[lemma]{Corollary}
\newtheorem{prop}[lemma]{Proposition}
\theoremstyle{definition}
\newtheorem{definition}[lemma]{Definition}
\newtheorem{example}[lemma]{Example}
\newenvironment{Proof}{{\noindent\emph Proof}\;}
{\hfill$\square$\par\medskip} \newlength\headseptemp
\newcommand{\Hmm}[1]{\leavevmode{\marginpar{\tiny%
$\hbox to 0mm{\hspace*{-0.5mm}$\leftarrow$\hss}%
\vcenter{\vrule depth 0.1mm height 0.1mm width \the\marginparwidth}%
\hbox to 0mm{\hss$\rightarrow$\hspace*{-0.5mm}}$\\\relax\raggedright #1}}}
\begin{document}
\title[Large time behaviour of heat kernels]{Note on basic features of large time behaviour of heat kernels}
\author[]{Matthias Keller$^1$}
\author[]{Daniel Lenz$^2$}
\author[]{Hendrik Vogt$^3$}
\author[]{Rados{\l}aw Wojciechowski$^4$}

\address{$^1$ Mathematisches Institut, Friedrich Schiller Universit\"at Jena,
  D-03477 Jena, Germany, m.keller@uni-jena.de}
\address{ $^2$ Mathematisches Institut, Friedrich Schiller Universit\"at Jena,
  D-03477 Jena, Germany, daniel.lenz@uni-jena.de,
  URL: http://www.analysis-lenz.uni-jena.de/ }
\address{$^3$ Institut f\"ur Mathematik, Technische Universit\"at Hamburg-Harburg,\break
  D-21073 Hamburg, Germany, hendrik.vo\rlap{\textcolor{white}{hugo@egon}}gt@tu-\rlap{\textcolor{white}{darmstadt}}harburg.de}
\address{$^4$ York College of the City University of New York, Jamaica, NY 11451, USA, rwojciechowski@york.cuny.edu
}

\begin{abstract}
Large time behaviour of heat semigroups (and, more generally, of positive selfadjoint semigroups) is studied.
Convergence of the semigroup to the ground state and of averaged logarithms of kernels to the ground state energy is shown in the general framework of positivity improving selfadjoint semigroups. This framework encompasses all irreducible semigroups coming from Dirichlet forms as well as suitable perturbations thereof. It includes, in particular, Laplacians on connected manifolds, metric graphs and discrete graphs.
\end{abstract}
\date{\today} %
\maketitle
\begin{center}
\emph{}
\end{center}



\abovedisplayshortskip=2pt plus 3pt
\belowdisplayshortskip=9pt plus 3pt minus 5pt
\newcommand\smallbds{\vskip-1\lastskip\vskip7pt plus3pt minus3pt\noindent}

\section*{Introduction}

The study of the heat equation has a long history. There is a vast amount of literature devoted to heat kernel estimates under various geometric assumptions.  Here, we want to investigate two basic issues concerning long term behaviour of the heat semigroup, which turn out to be rather independent of the underlying geometry.  These issues are:

\begin{itemize}

\item Convergence of the semigroup to the ground state.

\item Convergence of averaged logarithms of the kernels to the infimum of the spectrum.

\end{itemize}

In differential geometry these topics have been studied both for compact and non-compact manifolds. In the compact case the results are well known. In the general case, the first issue is settled by a result of Chavel and Karp \cite{CK} (see Simon \cite{Sim} for a simplification as well), and the second is known as theorem of Li after \cite{Li}, where a statement can be found. (The paper itself does not seem to contain a proof but rather provides much stronger estimates under additional curvature assumptions). Corresponding results on heat equations with an elliptic generator can also be found in Pinchover's work \cite{Pin1,Pin2}.

In probability theory these points are well known for (continuous time) Markov chains on a finite state space due to the Perron-Frobenius theorem. We are not aware of a treatment for general Markov chains on an infinite state space.

\smallskip

Here, we present a new approach to these two issues in the general framework of arbitrary positivity improving selfadjoint semigroups. This framework covers a large array of examples, among them Laplacians on manifolds, metric graphs and discrete graphs. In particular, we recover the mentioned results of \cite{CK,Li,Sim} and provide results for (continuous time) Markov chains on infinite state spaces.  A short way of phrasing our result would be that existence of kernels alone already implies the above long term behaviour (irrespective of the underlying setting or geometry).

\smallskip

Let us emphasize that we do not require at all the existence of a ground state. In fact, in situations with a ground state the above long term behaviour can be rather easily inferred. In this sense, a major achievement of our approach is to be applicable irrespective of the existence of a ground state.

\smallskip

The crucial new insight for our unified treatment in the mentioned generality
is that any positive function completely controls the bottom of the spectrum of a positivity improving selfadjoint semigroup.
The proof of this fact is based on
the simple observation that, for a strictly positive function $h$ on the measure space $(X,m)$, the set
  $$\set{ u \in L^2 (X,m) }{ 0 \le u \le h }$$
is total in $L^2 (X,m)$
(see Theorem~\ref{positivitypreserving} and its proof for details).

\smallskip

As a consequence of our investigations and earlier results of \cite{voi86,voi88} we can substantially generalize a result of Cabr\'{e} and Martel \cite{CM} on existence of positive weak solutions of the heat equation with a strongly negative potential. In the same spirit we can generalize a recent result on Kolmogorov operators due to Goldstein/Goldstein/Rhandi \cite{GGR}.

We also note in passing that an application of our results to graphs positively answers a question raised by Weber in \cite{Web2}.

 \smallskip

Unlike other basic results of semigroup theory, our results crucially depend on the selfadjointness of the underlying semigroup. In fact, they become wrong for general positivity improving semigroups as we show by an example below (see, however, \cite{Pin1,Pin2} for a treatment of certain non-selfadjoint semigroups in strongly local situations).

\smallskip

\smallskip

We develop our general results in three steps, first discussing general semigroups of selfadjoint operators in Section~\ref{Framework}, then turning to positivity improving semigroups in Section~\ref{Positivity} and finally turning to positivity improving semigroups with kernels in Section~\ref{Semigroups}. An application to negative perturbations of positivity improving semigroups is presented in Section~\ref{Application}. This provides the above mentioned generalization of the result of \cite{CM}. In Section~\ref{Examples} we discuss examples viz Laplacians on manifolds, metric graphs and discrete graphs. This section contains the answer to the mentioned question of \cite{Web2}.
 The (counter)example proving that selfadjointness is needed for our considerations is given at the end of Section~\ref{Positivity}.

The topic of the paper concerns the intersection of various subjects. Thus, not all readers may be familiar with the general theory of positivity improving semigroups. For this reason we include an appendix gathering various basic pieces of this theory.

\section{Framework and basic result}\label{Framework}

In this section we introduce the framework used throughout the paper. We then present the basic result on convergence concerning the two issues discussed in the introduction.

\bigskip

We consider a selfadjoint operator $L$ in a Hilbert space $\calH$. The inner product on $\calH$ is denoted by $\langle \cdot,\cdot\rangle$. The operator $L$ is assumed to be bounded below. Hence, the operators $e^{-tL}$, $t\ge0$, form a semigroup of bounded operators. The behaviour of this semigroup for large $t$ is the focus of attention in the present work.
The infimum of the spectrum of $L$ is denoted by $E_0 = E_0 (L)$. The projection onto the eigenspace associated to $E_0$ is denoted by $P$, i.e.,
 $$ P := \ind_{\{E_0\}} (L),$$
 where $\ind_{X}$ denotes the indicator function of the set $X$.
 Note that $P =0$ if $E_0$ is not an eigenvalue.  In any case, we speak about $E_0$ as the \textit{ground state energy}.  The spectral measure of an element $f\in \calH$ with respect to $L$ is denoted by $\rho_f$. It is a finite measure on $[E_0,\infty)$ with the characteristic property that
 $$\langle f, e^{-t L} f\rangle = \int_{[E_0,\infty)} e^{- t s} \, d\rho_f (s)$$
for all $t\ge 0$.

The topological support of the measure $\rho_f$ is given by
 $$\support (\rho_f) = \set{E\in \R}{ \rho_f (E - \delta, E+ \delta)>0\;\:\mbox{for all $\delta >0$}}.$$

\bigskip

The following is a rather immediate consequence of the spectral theorem (and part~(a) is already contained in the considerations of \cite{Sim}).

\begin{theorem}\label{spectralversion}
Let $L$ be a selfadjoint operator in the Hilbert space $\calH$ with infimum of the spectum $E_0 > -\infty$ and let $P = \ind_{\{E_0\}}(L)$. Then the following holds:

\textup{(a)} The operators $e^{t E_0} e^{- t L}$ converge strongly to $P$ for $t\to \infty$, i.e.,
 $$ e^{t E_0} e^{- t L} f \to P f \qquad (t\to\infty) $$
for all $f\in \calH$.

\smallskip

\textup{(b)} For any $f\in \calH$ with $f\ne 0$ the equality
 $$\lim_{t\to \infty} \frac{ \log \langle f, e^{- tL} f\rangle}{t}= - \inf \support (\rho_f )$$
holds.
\end{theorem}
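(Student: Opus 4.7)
Both parts should follow directly from the spectral theorem; the plan is to represent the objects in question through the spectral measure and then apply a short dominated-convergence argument for (a) and a Laplace-type estimate for (b).

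For part (a), I would write, using the spectral resolution $E(\cdot)$ of $L$ and the fact that $\rho_f$ is supported in $[E_0,\infty)$,
$$\bigl\| e^{tE_0} e^{-tL} f - Pf\bigr\|^2 = \int_{[E_0,\infty)} \bigl| e^{-t(s-E_0)} - \ind_{\{E_0\}}(s)\bigr|^2 \, d\rho_f(s).$$
For each $s\ge E_0$ the integrand is bounded by $1$, and as $t\to\infty$ it tends to $0$ pointwise (the values at $s=E_0$ coincide, while $e^{-t(s-E_0)}\to 0$ for $s>E_0$). Dominated convergence, with dominating function $\ind_{[E_0,\infty)}$ and $\rho_f$ a finite measure, gives the claim.

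For part (b), let $E_f := \inf \support(\rho_f)$. Since $f\ne 0$, $\rho_f$ is a nonzero finite measure concentrated on $[E_f,\infty)$, so
$$\langle f, e^{-tL} f\rangle = \int_{[E_f,\infty)} e^{-ts}\, d\rho_f(s) > 0,$$
and the logarithm is well defined. An upper bound is immediate: $e^{-ts}\le e^{-tE_f}$ on the support, hence
$$\langle f, e^{-tL} f\rangle \le e^{-tE_f} \|f\|^2, \qquad \text{so} \qquad \limsup_{t\to\infty} \frac{\log \langle f, e^{-tL}f\rangle}{t} \le -E_f.$$
For the matching lower bound, I use the defining property of topological support: for every $\delta>0$ we have $\rho_f\bigl([E_f, E_f+\delta)\bigr) > 0$, so restricting the integral to this set yields
$$\langle f, e^{-tL} f\rangle \ge e^{-t(E_f+\delta)} \rho_f\bigl([E_f, E_f+\delta)\bigr),$$
and therefore $\liminf_{t\to\infty} t^{-1}\log\langle f,e^{-tL}f\rangle \ge -(E_f+\delta)$. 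Letting $\delta\downarrow 0$ finishes the proof.

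There is no real obstacle here: the statement is essentially the Laplace principle for the finite measure $\rho_f$ combined with dominated convergence, and both ingredients are textbook. The only subtle point is the positivity of $\langle f, e^{-tL}f\rangle$ needed to make sense of the logarithm in (b); this is guaranteed precisely because $f\ne 0$ forces $\rho_f\ne 0$ and the exponential $e^{-ts}$ is strictly positive on the support.
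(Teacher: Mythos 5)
Your proposal is correct and follows essentially the same route as the paper: part (a) is the same dominated-convergence argument for the spectral integral, and part (b) is the same two-sided Laplace estimate with the $\delta$-window at $E_f$, taking $t\to\infty$ and then $\delta\downarrow 0$. The only cosmetic difference is that you bound the lower term by $\rho_f\bigl([E_f,E_f+\delta)\bigr)$ directly, whereas the paper writes the same quantity as $\|\ind_{[E_f,E_f+\delta]}(L)f\|^2$.
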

\begin{proof}
(a) The spectral theorem gives
 $$ \| (e^{tE_0} e^{-t L} - P) f\|^2 = \int_{[E_0,\infty)} |e^{t E_0} e^{- ts} - \ind_{\{E_0\}} (s)|^2 \, d\rho_f (s).$$
Obviously, the integrand is bounded by $1$ and tends to zero everywhere. Hence, the Lebesgue convergence theorem gives (a).

\smallskip

(b) Let $E_f := \inf \support (\rho_f )$ and let $t,\delta>0$. The spectral theorem easily yields
 $$e^{- (E_f + \delta) t} \| \ind_{[E_f, E_f + \delta]} (L) f\|^2 \le \int_{[E_f,\infty)} e^{- ts} \, d\rho_f (s) \le e^{- E_f t} \|f\|^2.$$
We can take logarithms in this inequality as
 $$ \ind_{[E_f, E_f + \delta]} (L) f\ne 0$$
by the definition of $\support \rho_f$, obtaining
 $$ -(E_f+\delta)t + 2\ln \| \ind_{[E_f, E_f + \delta]} (L) f\|
    \le \ln \langle f,e^{-tL}f \rangle \le -E_f t + 2\ln\|f\|. $$
After division by $t$, the desired statement then follows by considering first the limit $t\to \infty$ and then $\delta\to 0$.
\end{proof}

For later use we note that any selfadjoint $L$ which is bounded below by a constant $C\in\R$ comes with a closed form $Q$ defined by
$$ Q (f,g) :=\langle (L -C)^{1/2} f, (L - C)^{1/2} g\rangle + C \langle f, g\rangle$$
for $f,g\in D(Q) := D ( (L -C)^{1/2})$. Note that this definition does not depend on the actual choice of $C$, provided that $L\ge C$.

\section{Positivity improving semigroups}\label{Positivity}

In this section we specialize the framework of the last section to positivity preserving selfadjoint semigroups. This will allow us to strengthen the result on convergence to the ground state energy of the previous section.

We note already here the following, with notation to be explained below:
Our considerations hinge on strict positivity of expressions of the form $e^{-t L} f$.  In the case that $e^{-t L}$ is only positivity \emph{preserving}, we will therefore require strict positivity of $f$. For positivity \emph{improving} $e^{-t L}$ it suffices to assume the corresponding $f$ to be non-negative (and non-trivial).

\bigskip

Let $\calH = L^2 (X,m)$, where $X$ is a measure space with $\sigma$-algebra $\calB$ and a $\sigma$-finite measure $m$. A function $f$ on $X$ is called \textit{positive} if
 $$\mbox{ $\langle f, f\rangle >0$ and $ f(x) \ge 0 $ for $m$-almost every $x\in X$}.$$
 A function $f$ on $X$ is called \textit{strictly positive} if $f(x)>0$ for $m$-almost every $x\in X$.
 The semigroup $(e^{-tL})_{t\ge0}$ is called \textit{positivity preserving} if $e^{-t L}$ maps non-negative functions to non-negative functions for each $t> 0$; it is called \textit{positivity improving} if
 $$ e^{-t L} f\; \mbox{ is strictly positive}$$
for any positive $f$ and all $t>0$. Note that $\sigma$-finiteness of the measure $m$ is a necessary condition for the semigroup to be positivity improving.

\medskip

\textbf{Remark}. As is well known, the semigroup is positivity preserving if and only if the associated symmetric form $Q$ satisfies the first Beurling-Deny criterion (see e.g.\ \cite{BH,Fuk}). A positivity preserving semigroup is positivity improving if it has further irreducibility properties; see Appendix~\ref{Irreducibility} for further details.

\bigskip

The following result on positivity preserving semigroups is the crucial new ingredient of our considerations and the main result of this section.

\begin{theorem} \label{positivitypreserving}  Let $L$ be selfadjoint and bounded below in $L^2 (X,m)$ with infimum of the spectrum $E_0$. Assume that the associated semigroup $(e^{-t L})_{t\ge0}$ is positivity preserving. Then
 $$ \lim_{t\to \infty} \frac{\log \langle f, e^{-t L} g\rangle}{t} = - E_0$$
for all strictly positive $f$ and $g$ in $L^2 (X,m)$.
\end{theorem}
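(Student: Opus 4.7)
The plan is to prove matching upper and lower bounds on $\frac{1}{t}\log \langle f, e^{-tL}g\rangle$. For the upper bound, Cauchy--Schwarz combined with the spectral estimate $\|e^{-tL}\| \le e^{-tE_0}$ gives $\langle f, e^{-tL}g\rangle \le \|f\|\,\|g\|\,e^{-tE_0}$. To legitimately pass to the logarithm I first note that $\langle f, e^{-tL}g\rangle > 0$: the operator $e^{-tL}$ is positivity preserving with spectrum contained in $(0, e^{-tE_0}]$, so $e^{-tL}g$ is nonnegative and nonzero, and paired with the strict positivity of $f$ this makes the inner product strictly positive. Dividing $\log\langle f, e^{-tL}g\rangle \le \log(\|f\|\,\|g\|) - tE_0$ by $t$ yields $\limsup_{t\to\infty} \frac{1}{t}\log \langle f, e^{-tL}g\rangle \le -E_0$.

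For the lower bound, the crucial step is to sandwich $\langle f, e^{-tL}g\rangle$ from below by a diagonal quantity to which Theorem~\ref{spectralversion}(b) applies. For any $h$ with $0 \le h \le \min(f,g)$, the differences $f-h$ and $g-h$ are nonnegative, so positivity preservation of $e^{-tL/2}$ yields $0 \le e^{-tL/2}h \le e^{-tL/2}f$ and $0 \le e^{-tL/2}h \le e^{-tL/2}g$ almost everywhere. Multiplying pointwise and integrating,
\begin{equation*}
\langle h, e^{-tL}h\rangle = \|e^{-tL/2}h\|^2 \le \langle e^{-tL/2}f, e^{-tL/2}g\rangle = \langle f, e^{-tL}g\rangle.
\end{equation*}
Theorem~\ref{spectralversion}(b) tells us that whenever $h\ne 0$ one has $\frac{1}{t}\log \langle h, e^{-tL}h\rangle \to -\inf\support\rho_h$, so it is enough to produce such an $h$ with $\inf\support\rho_h$ arbitrarily close to $E_0$.

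This is where the totality principle highlighted in the introduction enters. Since $\min(f,g)$ is strictly positive, the set $S := \set{u \in L^2(X,m)}{0 \le u \le \min(f,g)}$ is total in $L^2(X,m)$: if $w \in L^2$ is orthogonal to every element of $S$, then for each $\eps > 0$ and each measurable $B \subseteq \{\min(f,g) \ge \eps\}$ of finite $m$-measure the function $\eps\ind_B$ lies in $S$, so $\int_B w\,dm = 0$; by $\sigma$-finiteness this forces $w = 0$ almost everywhere on $\{\min(f,g) \ge \eps\}$, and letting $\eps\to 0$ gives $w = 0$. Fix $\delta > 0$. Since $E_0 \in \sigma(L)$ the spectral projection $\ind_{[E_0, E_0+\delta)}(L)$ is nonzero, so if it annihilated every element of $S$ it would vanish by linearity and continuity, a contradiction. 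Hence there exists $h \in S$ with $\ind_{[E_0, E_0+\delta)}(L)h \ne 0$, so in particular $h \ne 0$ and $\inf\support\rho_h < E_0 + \delta$. Combining the sandwich inequality with Theorem~\ref{spectralversion}(b) applied to this $h$ gives $\liminf_{t\to\infty} \frac{1}{t}\log\langle f, e^{-tL}g\rangle \ge -(E_0+\delta)$, and sending $\delta \to 0$ completes the proof. The main conceptual hurdle is precisely the sandwich step, which couples the soft density of $S$ with the exact spectral identity of Theorem~\ref{spectralversion}(b) to recover the ground state energy even in the absence of an actual ground state vector.
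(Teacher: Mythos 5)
Your proof is correct and follows essentially the same route as the paper: the same upper bound via $\|e^{-tL}\|\le e^{-tE_0}$, and the same lower bound obtained by sandwiching $\langle f,e^{-tL}g\rangle$ from below by $\langle h,e^{-tL}h\rangle$ for $0\le h\le\min(f,g)$ and exploiting totality of this order interval together with Theorem~\ref{spectralversion}(b); you merely spell out the totality and spectral-projection details that the paper compresses into the line $(\clubsuit)$. (One cosmetic remark: for unbounded $L$ the spectrum of $e^{-tL}$ may contain $0$, but the fact you actually need -- that $e^{-tL}g\ne0$ -- still holds, since $0$ is never an eigenvalue of $e^{-tL}$.)
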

\begin{proof}
Without loss of generality we can assume that $E_0 = 0$. We will show two inequalities:

\smallskip

As $e^{- t L}$ is positivity preserving, we have
 $$ 0 \le \langle f,e^{-t L} g\rangle \le \|e^{-t L}\| \|f\| \|g\|.$$
As $E_0 =0$, we have $\|e^{-t L}\|\le 1$ for all $t\ge 0$ and hence
 $$\limsup_{t\to \infty} \frac{\log \langle f, e^{-t L} g\rangle}{t} \le 0$$
follows. To show the reverse inequality we need the assumption that $f$ and $g$ are strictly positive.  Thus,
 $$ h := \min\{f,g\}\;\: \mbox{satisfies} \;\: h(x) >0 \;\:\mbox{for $m$-almost every $x\in X$.}$$
Therefore,
 $$\calA := \set{u\in L^2 (X,m)}{0 \le u \le h}$$
is total in $L^2 (X,m)$ (i.e., the linear span of $\calA$ is dense). Thus,
 $$ 0= E_0 = \inf_{u\in \calA} E_u, \quad (\clubsuit) $$
where $E_u := \inf \support (\rho_u)$.
As $e^{-t L}$ is positivity preserving and obviously
 $$f,g\ge \min\{f,g\} = h \ge u \ge 0$$
for any $u\in \calA$, we obtain
 $$\langle f, e^{-t L} g\rangle \ge \langle h,e^{-t L} h\rangle \ge \langle u,e^{-t L} u\rangle$$
for any $u\in \calA$. Combined with (b) of Theorem~\ref{spectralversion} this gives
 $$\liminf_{t\to \infty} \frac{ \log \langle f, e^{-t L} g\rangle}{t} \ge \lim_{t\to \infty} \frac{\log \langle u, e^{-t L} u \rangle}{t} = - E_u$$
for any $u\in \calA$. By $(\clubsuit)$ we obtain the desired inequality
 $$ \liminf_{t\to \infty} \frac{ \log \langle f, e^{-t L} g\rangle}{t} \ge 0,$$
and the theorem is proven.
\end{proof}

In the case of positivity improving semigroups, the previous result can be strengthened in that the assumption of strict positivity on $f$ and $g$ can be weakened.  This is spelled out in the next result.  This result can be seen as an integrated version of Li's theorem. It does not require existence of kernels.

\begin{theorem}\label{positivityimprovingversion}
Let $L$ be selfadjoint and bounded below in $L^2 (X,m)$ with infimum of the spectrum $E_0$. Assume that the associated semigroup $(e^{-t L})_{t\ge0}$ is positivity improving. Then
 $$ \lim_{t\to \infty} \frac{\log \langle f, e^{-t L} g\rangle}{t} = - E_0$$
for all positive $f$ and $g$ in $L^2 (X,m)$.
\end{theorem}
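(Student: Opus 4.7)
The natural plan is to reduce the present statement to Theorem~\ref{positivitypreserving} by using the semigroup itself as a smoothing device: since $e^{-sL}$ is positivity improving, it turns any merely positive function into a strictly positive one, and then selfadjointness of $e^{-sL}$ lets us absorb the smoothing into the inner product.

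In detail, I would fix some $s>0$ and set $F := e^{-sL}f$ and $G := e^{-sL}g$. Because $f$ and $g$ are positive and $(e^{-tL})_{t\ge0}$ is positivity improving, $F$ and $G$ are strictly positive. Both are in $L^2(X,m)$ since the semigroup acts as bounded operators on $\calH=L^2(X,m)$. For $t>2s$, selfadjointness and the semigroup property give
$$ \langle f, e^{-tL} g\rangle = \langle e^{-sL} f, e^{-(t-2s)L} e^{-sL} g\rangle = \langle F, e^{-(t-2s)L} G\rangle. $$
Applying Theorem~\ref{positivitypreserving} to the strictly positive pair $F,G$ yields
$$ \lim_{r\to\infty} \frac{\log \langle F, e^{-rL} G\rangle}{r} = -E_0. $$
Setting $r=t-2s$ and writing
$$ \frac{\log \langle f, e^{-tL} g\rangle}{t} = \frac{\log \langle F, e^{-rL} G\rangle}{r}\cdot\frac{r}{t}, $$
one sees that the right-hand side tends to $-E_0\cdot 1=-E_0$ as $t\to\infty$, which proves the theorem.

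There is no serious obstacle here; the only point that requires a moment's care is checking that $F,G$ are genuinely strictly positive elements of $L^2$, and this is exactly what the positivity improving hypothesis provides (together with boundedness of $e^{-sL}$ on $L^2$). The argument nicely illustrates the dichotomy emphasized in the paper: for positivity preserving semigroups one must start with strictly positive vectors, while positivity improving semigroups manufacture such vectors after an arbitrarily short time, so any positive input suffices.
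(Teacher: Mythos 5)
Your proof is correct and follows essentially the same route as the paper: smooth $f,g$ through the semigroup to obtain strictly positive vectors (the paper takes $s=1$, i.e.\ $e^{-L}f$, $e^{-L}g$), use selfadjointness to write $\langle f, e^{-tL} g\rangle = \langle e^{-sL}f, e^{-(t-2s)L} e^{-sL}g\rangle$, and invoke Theorem~\ref{positivitypreserving}. Your explicit handling of the time shift via the factor $r/t\to 1$ is exactly the "follows easily" step the paper leaves to the reader.
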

\begin{proof}
As $e^{-t L}$ is positivity improving, the functions
$e^{- L} f$, $e^{- L} g$ are strictly positive. Clearly,
 $$ \langle f, e^{- t L} g\rangle = \langle e^{-L} f, e^{- (t - 2)L} e^{-L} g\rangle $$
holds for any $t>2$. Now, the theorem follows easily from Theorem~\ref{positivitypreserving}.
\end{proof}

We note the following consequence of the theorem and part (b) of Theorem~\ref{spectralversion}.

\begin{coro}
Consider $L$ as in the previous theorem. Let $f \in L^2 (X,m)$ be positive. Then
 $$ E_0 = \inf \support (\rho_f), $$ \smallbds
and, in particular,
 $$ \ind_{ [E_0,E_0+ \delta)} (L) f\ne 0$$ \smallbds
for any $\delta >0$.
\end{coro}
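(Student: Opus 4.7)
The plan is to apply Theorem~\ref{positivityimprovingversion} with $g=f$ and compare the resulting limit to the spectral formula in part~(b) of Theorem~\ref{spectralversion}. Since $f$ is positive (so in particular $f\neq 0$), Theorem~\ref{spectralversion}(b) gives
$$\lim_{t\to\infty} \frac{\log\langle f, e^{-tL} f\rangle}{t} = -\inf\support(\rho_f),$$
while Theorem~\ref{positivityimprovingversion}, applied to the positive pair $(f,f)$, gives
$$\lim_{t\to\infty} \frac{\log\langle f, e^{-tL} f\rangle}{t} = -E_0.$$
Equating the two limits yields the first assertion $E_0 = \inf\support(\rho_f)$.

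For the "in particular" statement, I would argue as follows. The spectral measure $\rho_f$ is supported in $[E_0,\infty)$, because $\rho_f$ is concentrated on the spectrum of $L$ and $L\ge E_0$. Having just shown $\inf\support(\rho_f)=E_0$, the point $E_0$ lies in the closed set $\support(\rho_f)$, so every open neighbourhood of $E_0$ has strictly positive $\rho_f$-mass. In particular, for any $\delta>0$,
$$\rho_f\bigl([E_0,E_0+\delta)\bigr) = \rho_f\bigl((E_0-\delta,E_0+\delta)\bigr) > 0,$$
and since $\|\ind_{[E_0,E_0+\delta)}(L) f\|^2 = \rho_f([E_0,E_0+\delta))$ by the spectral theorem, the vector $\ind_{[E_0,E_0+\delta)}(L) f$ is non-zero.

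There is no real obstacle here: the corollary is essentially the juxtaposition of the two previously established limits, together with a standard fact about topological supports of positive measures on the half-line. The only mild subtlety is keeping track that $\inf\support(\rho_f)$ actually belongs to $\support(\rho_f)$, which is automatic once one knows the support is a non-empty closed subset of $[E_0,\infty)$.
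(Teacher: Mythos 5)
Your proposal is correct and is exactly the argument the paper intends: the corollary is stated there as an immediate consequence of Theorem~\ref{positivityimprovingversion} (applied with $g=f$) combined with Theorem~\ref{spectralversion}(b), which is precisely your juxtaposition of the two limits. Your handling of the ``in particular'' part (the infimum of the non-empty closed support is attained, so $\rho_f([E_0,E_0+\delta))>0$) is the standard completion the paper leaves implicit.
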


\textbf{Remarks.} (a) The corollary is well known if $E_0$ is an eigenvalue. In this case there exists a (unique) almost everywhere positive normalized eigenfunction to $E_0$. This eigenfunction has then a non-vanishing inner product with any positive $f$ and this easily implies the corollary.

(b) For strictly positive $f$ the corollary will also hold if
 the semigroup is only assumed to be positivity preserving (as can be seen from Theorem~\ref{positivitypreserving}).

(c) If the semigroup is only assumed to be positivity preserving, then the corollary (and the preceeding theorem) will in general be false: consider the direct sum of two positivity improving semigroups on disjoint sets and assume that the infima of the spectra of the two generators are different.

\medskip

Large parts of the basic theory of positivity preserving semigroups do not depend on the selfadjointness of the generator. For this reason it is remarkable that our results crucially depend on this selfadjointness: In the following example it is shown that Theorem~\ref{positivityimprovingversion} is not true in general without the selfadjointness of $L$.

\begin{example}
Let $\N = \{1,2,\ldots\}$ be the set of natural numbers and $\ell_2 := \ell^2 (\N)$ the associated $\ell^2$ space (with $m(\{x\}) = 1$ for all $x\in \N$).
Let $L$ be the left shift on $\ell_2$, i.e, $Lx = (x_2,x_3,\dots)$ for all $x = (x_1,x_2,x_3,\dots) \in \ell_2$. Observe that, as a positivity preserving operator, $L$ generates a positivity preserving semigroup $(e^{tL})_{t\ge0}$ on $\ell_2$ which, however, is not positivity improving. (Note that here we consider $L$ and not $-L$ as a generator.)

Let $\mu\in(0,1)$. Then $y_\mu := (\mu^k)_{k\in\N} \in \ell_2$. We define a bounded positivity preserving operator $L_1$ on $\ell_2$ by
 $$L_1x := Lx + x_1 y_\mu;$$
then one easily sees that $L_1$ generates a positivity improving semigroup $(e^{tL_1})_{t\ge0}$ on $\ell_2$. For $\lambda\in(0,1)$, $\lambda \ne 2\mu$, a straightforward computation shows that the function $f \from [0,\infty) \to \ell_2$,
 $$f(t) := e^{\lambda t} y_\lambda
           + \frac{\lambda}{\lambda-2\mu}(e^{\lambda t}-e^{2\mu t}) y_\mu $$
satisfies the differential equation $f'(t) = L_1f(t)$ for $t\ge0$, and $f(0) = y_\lambda$. Therefore,
 $$e^{tL_1} y_\lambda = f(t)$$ \smallbds
for all $t\ge0$.

Now suppose that $\mu < \frac12$. Then for $\lambda \in (2\mu,1)$ and any positive $x \in \ell_2$ we obtain
 $$ \lim_{t\to \infty} \frac{\log \langle x, e^{tL_1} y_\lambda\rangle}{t}
 = \lambda. $$
In particular, the limit depends on the choice of $\lambda$, so no analogue of Theorem~\ref{positivityimprovingversion} can be true for the operator $L_1$.
\end{example}

\section{Semigroups with kernels}\label{Semigroups}

In this section we further specialize the setting of the last section by assuming existence of (pointwise consistent) kernels. More precisely, we assume, for $f\in L^2 (X,m)$ and $t>0$, that
 $$ e^{-t L} f (x) = \int_X p_t (x,y) f(y) \, dm(y)\,\ \mbox{for $m$-almost every}\ x\in X, $$
for a measurable function
 $$p \from (0,\infty)\times X\times X \longrightarrow (0,\infty)$$ \smallbds
satisfying
\begin{itemize}
\item[(K1)] $p_t (x,y) = p_t (y,x)$ for all $x,y\in X$,
\item[(K2)] $p_t (x,\cdot) \in L^2 (X,m)$ for any $x\in X$ and $t>0$,
\item[(K3)] $p_{t+ s} (x,y) = \int_X p_t (x,z) p_s (z,y) \, dm (z)$ for all $x,y\in X$ and $t,s>0$.
\end{itemize}
Note that $p$ is positive everywhere and accordingly $e^{-t L}$ is positivity improving.


\medskip

\textbf{Remarks.} (a) In concrete situations, strict positivity of $p$ may be achieved from non-negativity of $p$ by removing a set of measure zero from $X$.  Then our results below remain valid (after removing this set).
See also Remark~(c) at the end of this section.

(b) At the end of the section we will show that the existence of a kernel with these properties is automatically true in certain (rather general) topological situations.

\medskip

Under the above assumptions, we can combine the results of the previous section with ideas developed in the context of manifolds in \cite{CK,Sim} to obtain the following result. In fact, part~(a) of the result is a rather direct adaption of the proof in \cite{Sim}. Part~(b) is then a consequence of (a) combined with Theorem~\ref{positivityimprovingversion}. Indeed, Theorem~\ref{positivityimprovingversion} is crucial for dealing with the situation when the infimum of the spectrum is not an eigenvalue.

\begin{theorem}\label{kernelversion}
Let $L$ be a selfadjoint operator in $L^2 (X,m)$ with infimum of the spectrum given by $E_0 >-\infty$. Assume that the semigroup $(e^{-tL})_{t\ge0}$ has a kernel\/ $p$ as above. Then the following holds:

\textup{(a)} There exists a unique measurable function $\Phi \from X \to [0,\infty)$ such that
 $$ e^{t E_0} p_t (x,y) \to \Phi (x) \Phi (y) \qquad (t\to \infty)$$
for all $x,y\in X$. If $E_0$ is an eigenvalue, then the function $\Phi$ is a strictly positive normalized eigenfunction to $E_0$. If $E_0$ is not an eigenvalue, then the function $\Phi$ vanishes everywhere.

\textup{(b)} For all $x,y\in X$ the convergence
 $$ \frac{ \log p_t (x,y)}{t} \to - E_0 \qquad (t\to \infty)$$
holds.
\end{theorem}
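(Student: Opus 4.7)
My plan is to obtain (a) by lifting the spectral convergence of Theorem~\ref{spectralversion}(a) to the kernel level, and then to deduce (b) directly from Theorem~\ref{positivityimprovingversion}.

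For part~(a), I introduce the $L^2$-functions $f_t^x := p_t(x,\cdot)$, which lie in $L^2(X,m)$ by (K2). From (K3) one reads off the semigroup identity $e^{-tL} f_s^x = f_{t+s}^x$, and from (K1) combined with (K3) the key bilinear representation $p_{t+s}(x,y) = \langle f_t^x, f_s^y\rangle$. Applying Theorem~\ref{spectralversion}(a) to the vector $f_s^x$ gives
\[
 e^{tE_0} f_{t+s}^x \;=\; e^{tE_0} e^{-tL} f_s^x \;\longrightarrow\; P f_s^x \qquad (t\to\infty)
\]
in $L^2$; reindexing, $e^{uE_0} f_u^x \to e^{sE_0} P f_s^x$ in $L^2$ as $u\to\infty$, and the right-hand side is independent of $s$ because $P$ commutes with $e^{-sL}$ and $L$ acts as $E_0$ on the range of $P$. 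Calling this common limit $\Phi^x$ and inserting into the bilinear formula, continuity of the inner product and the choice $t = s = u/2$ yield $e^{uE_0} p_u(x,y) \to \langle \Phi^x, \Phi^y\rangle$ for every $x,y\in X$.

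To rewrite the limit as a product $\Phi(x)\Phi(y)$, I would appeal to the Perron--Frobenius type fact collected in the appendix: for a positivity improving selfadjoint semigroup the spectral projection $P = \ind_{\{E_0\}}(L)$ has rank at most one, and its range is spanned by a strictly positive normalized eigenfunction $\phi_0$ whenever $E_0$ is an eigenvalue. In that case $Pg = \langle g,\phi_0\rangle \phi_0$, and since $\langle f_s^x,\phi_0\rangle = (e^{-sL}\phi_0)(x) = e^{-sE_0}\phi_0(x)$ we get $\Phi^x = \phi_0(x)\phi_0$ and $\langle \Phi^x,\Phi^y\rangle = \phi_0(x)\phi_0(y)$; setting $\Phi := \phi_0$ (with the pointwise representative fixed by $\phi_0(x) := e^{sE_0}\!\int p_s(x,z)\phi_0(z)\,dm(z)$) does the job. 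If $E_0$ is not an eigenvalue, then $P=0$, so $\Phi^x = 0$ and $\Phi \equiv 0$ works. Uniqueness follows from $\Phi(x)^2 = \Phi(x)\Phi(x)$ and non-negativity of $\Phi$.

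For part~(b), the cleanest route is to apply Theorem~\ref{positivityimprovingversion} directly to $f_1^x$ and $f_1^y$, which are positive elements of $L^2(X,m)$ by (K2) and strict positivity of $p$. This yields $\frac{1}{t}\log\langle f_1^x, e^{-tL} f_1^y\rangle \to -E_0$; combining with $\langle f_1^x, e^{-tL} f_1^y\rangle = p_{t+2}(x,y)$ (again by (K1) and (K3)) and the change of variable $u = t+2$ gives the claim. The main obstacle is really the dichotomy in part~(a): when $E_0$ lies in the continuous spectrum, the product $\Phi(x)\Phi(y)$ vanishes and (a) alone contains no information about the exponential rate of decay of $p_t(x,y)$. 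It is precisely at this point that Theorem~\ref{positivityimprovingversion} — whose proof in turn rests on the totality observation underlying Theorem~\ref{positivitypreserving} — becomes indispensable, exactly as the authors indicate.
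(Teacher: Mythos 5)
Your proposal is correct and follows essentially the same route as the paper: the bilinear identity $p_{t+s}(x,y)=\langle p_t(x,\cdot),p_s(y,\cdot)\rangle$ together with Theorem~\ref{spectralversion}(a) and the Perron--Frobenius description of $P$ (range spanned by an a.e.\ positive eigenfunction, with the pointwise representative obtained by integrating against the kernel) for part~(a), and Theorem~\ref{positivityimprovingversion} applied to $p_1(x,\cdot)$, $p_1(y,\cdot)$ for part~(b). The only cosmetic difference is that you invoke Theorem~\ref{positivityimprovingversion} uniformly for (b), whereas the paper handles the eigenvalue case via strict positivity of $\Phi$ and the kernel convergence; both are fine.
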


\begin{proof}
Note that the uniqueness in (a) follows immediately from the convergence
statement and non-negativity of $\Phi$ by considering $x = y$.

Without loss of generality, assume that $E_0 = 0$. For any $x\in X$ we define the function $g_x\in L^2 (X,m)$ via the kernel at $t=1$ by
 $$ g_x (y) = p_1 (x,y).$$
Note that $g_x$ is positive everywhere by the assumption on $p$.
Moreover, the assumption (K3) on $p$ immediately gives
 $$ p_{t + 2} (x,y) = \langle g_x, e^{-t L} g_y\rangle. \quad (\spadesuit)$$
We now distinguish two cases:

\medskip

\textit{Case 1: $E_0 = 0$ is not an eigenvalue.} As $E_0$ is not an eigenvalue, the projection $P$ is zero. We set $\Phi \equiv 0$. Then by $(\spadesuit)$ and (a) of Theorem~\ref{spectralversion} we see that
 $$ p_t (x,y) = \langle g_x, e^{- (t-2) L} g_y\rangle \to \langle g_x, P g_y\rangle = 0 = \Phi (x) \Phi (y)$$
as $t\to\infty$, for all $x,y\in X$. This gives (a) of the theorem in this case. Moreover, $(\spadesuit)$ and Theorem~\ref{positivityimprovingversion} give
 $$ \frac{ \log p_t (x,y)}{t} = \frac{ \log \langle g_x, e^{-(t-2)L} g_y\rangle}{t} \to - E_0.$$
This shows the desired statements in this case.

\bigskip

\textit{Case $2$: $E_0=0$ is an eigenvalue.} By general principles (see e.g.\ Section XIII.12 of \cite{RS}), there then exists a unique normalized eigenfunction $\Psi$ that is positive $m$-almost everywhere and satisfies
 $$P = \langle \Psi, \cdot\rangle \Psi.$$
Observe that
 $$\Phi (x) := \langle g_x, \Psi\rangle = \int p_1 (x, y) \Psi (y) \, dm(y) = e^{-L} \Psi(x) = \Psi(x)$$
for $m$-almost every $x\in X$, so that $\Phi$ is a representative of $\Psi$.
Moreover, by the assumption on $p$ the function $g_x$ is strictly positive for all $x\in X$ and hence $\Phi(x)>0$ for all $x\in X$.
Finally, $(\spadesuit)$ and
(a) of Theorem~\ref{spectralversion} give
 $$ p_t (x,y) = \langle g_x, e^{- (t-2) L} g_y\rangle \to \langle g_x, P g_y\rangle = \langle g_x, \Psi \rangle \langle g_y, \Psi\rangle= \Phi (x) \Phi (y) $$
for all $x,y\in X$. This proves part (a) of the theorem in this case.
 Given strict positivity of $\Phi$, the convergence of the kernels gives easily part (b) of the theorem in this case.
\end{proof}

\textbf{Remark.} Part~(b) of the previous theorem can be seen as a rather general version of Li's theorem \cite{Li} (cf.\ the introduction).

\smallskip

Note that the theorem gives a characterization of whether $E_0$ is an eigenvalue:

\begin{coro}
Let $L$ be as in the theorem. Let $\Phi_t (x) := \left( e^{t E_0} p_t (x,x)\right)^{1/2}$. Then the following are equivalent:

\begin{itemize}
\item[(i)] $E_0$ is an eigenvalue.

\item[(ii)] The pointwise limit (for $t\to \infty$) of $\Phi_t$ is not the zero function in $L^2 (X,m)$.

\item[(iii)] There exists an $x\in X$ such that $\lim_{t\to\infty} \Phi_t (x) \ne 0$.
\end{itemize}
\end{coro}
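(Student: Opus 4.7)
The proof reduces to a direct reading of Theorem~\ref{kernelversion}(a). Setting $x=y$ in that theorem gives $\Phi_t(x)^2 = e^{tE_0}p_t(x,x) \to \Phi(x)^2$ pointwise, and since $\Phi_t(x)\ge0$ and $\Phi(x)\ge0$, taking square roots yields $\Phi_t(x) \to \Phi(x)$ pointwise for every $x\in X$. Thus the pointwise limit of $\Phi_t$ exists and equals the function $\Phi$ supplied by Theorem~\ref{kernelversion}(a). The dichotomy in that theorem says that this limit $\Phi$ is either a strictly positive normalized eigenfunction (if $E_0$ is an eigenvalue) or the zero function everywhere (if not).

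The plan is then to verify the three implications cyclically using this dichotomy. For (i)$\Rightarrow$(ii), if $E_0$ is an eigenvalue, then $\Phi$ is normalized in $L^2(X,m)$ and therefore not the zero element of $L^2(X,m)$. For (ii)$\Rightarrow$(iii), if the pointwise limit $\Phi$ is not the zero element of $L^2(X,m)$, then $\Phi$ cannot be identically zero on $X$, so there is some $x\in X$ with $\Phi(x)\ne0$, which is exactly $\lim_{t\to\infty}\Phi_t(x)\ne0$. For (iii)$\Rightarrow$(i), we argue by contrapositive: if $E_0$ is not an eigenvalue, then $\Phi\equiv0$ everywhere on $X$, so $\Phi_t(x)\to0$ for every $x\in X$, violating (iii).

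There is essentially no technical obstacle; the only mild subtlety is distinguishing the two senses in which $\Phi$ can ``vanish'' — as the zero element of $L^2(X,m)$ versus being the zero function everywhere on $X$. Theorem~\ref{kernelversion}(a) is phrased pointwise (for all $x,y\in X$, not merely almost every), so the step (iii)$\Rightarrow$(i) genuinely uses the pointwise version of the convergence, and one should make this explicit when writing the proof. Apart from that, the corollary is a clean packaging of the eigenvalue/no-eigenvalue alternative from Theorem~\ref{kernelversion}(a).
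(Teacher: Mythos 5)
Your proof is correct and is exactly the argument the paper intends: the corollary is stated as an immediate consequence of Theorem~\ref{kernelversion}(a), and your cyclic verification (using the eigenvalue/no-eigenvalue dichotomy for the pointwise limit $\Phi$, together with the careful distinction between vanishing in $L^2(X,m)$ and vanishing everywhere) is the same reading of that theorem. No gaps.
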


We conclude the section with the already mentioned discussion of existence of kernels in a topological setting.

\smallskip

\begin{prop} \label{existence-kernel} Let $X$ be a locally compact separable metric space and $m$ a Radon measure on $X$ with full support. Let $L$ be selfadjoint and bounded below in $L^2 (X,m)$ with positivity improving semigroup $(e^{-t L})_{t\ge 0}$.  Assume that $e^{-t L} $ maps $L^2 (X,m)$ into $C(X)$, the set of continuous functions on $X$. Then there exists a (unique) measurable function $p \from (0,\infty)\times X\times X \to [0,\infty)$ that is continuous separately in each $X$ variable, fulfills (K1), (K2), (K3) and satisfies
 $$ e^{-t L} f (x) = \int_X p_t (x,y) f(y) \,dm (y)$$
for all $x\in X$ and $f\in L^2 (X,m)$ (where $e^{-t L} f$ denotes the unique continuous representative). Moreover, there exists a closed subset $M$ of zero $m$-measure such that $p_t (x,y)>0$ for any $t>0$ whenever $x\notin M$ and $y\notin M$ hold.
\end{prop}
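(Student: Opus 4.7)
The plan is to combine Riesz representation with the smoothing effect that $e^{-tL}$ maps $L^2(X,m)$ into $C(X)$, and then to use selfadjointness and the semigroup law to promote the resulting $m$-a.e.\ defined object to a pointwise kernel. As a first step I would apply the closed graph theorem to show that $e^{-tL}\colon L^2(X,m)\to C(X)$ is continuous when $C(X)$ carries the topology of uniform convergence on compacta: if $f_n\to f$ in $L^2$ and $e^{-tL}f_n\to g$ uniformly on compacta, then a subsequence of $e^{-tL}f_n$ converges to $e^{-tL}f$ $m$-a.e., and since $m$ has full support the continuous functions $g$ and $e^{-tL}f$ must agree everywhere. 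Hence for each $t>0$ and $x\in X$, the functional $f\mapsto(e^{-tL}f)(x)$ is continuous on $L^2(X,m)$, and Riesz representation produces $\widetilde p_t(x,\cdot)\in L^2(X,m)$ with
$$(e^{-tL}f)(x)=\int_X \widetilde p_t(x,y)f(y)\,dm(y);$$
as a byproduct $x\mapsto \widetilde p_t(x,\cdot)$ is weakly continuous into $L^2$, since $x\mapsto (e^{-tL}f)(x)$ is continuous for every $f\in L^2$.

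Next, I would define the pointwise kernel by
$$ p_t(x,y):=\int_X \widetilde p_{t/2}(x,z)\,\widetilde p_{t/2}(y,z)\,dm(z), $$
which is manifestly symmetric and thereby gives (K1). Weak $L^2$-continuity of $\widetilde p_{t/2}(\cdot,z)$ in the first slot yields separate continuity of $p_t$ in $x$ and hence, by symmetry, in $y$. Applying Riesz at the point $y$ to the $L^2$-function $\widetilde p_{t/2}(x,\cdot)$ shows $p_t(x,y)=\bigl(e^{-(t/2)L}\widetilde p_{t/2}(x,\cdot)\bigr)(y)$, so $p_t(x,\cdot)$ is the (unique) continuous representative of $\widetilde p_t(x,\cdot)=e^{-(t/2)L}\widetilde p_{t/2}(x,\cdot)$, giving (K2). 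Condition (K3) follows by writing $\widetilde p_{s+t}(x,\cdot)=e^{-tL}\widetilde p_s(x,\cdot)$, evaluating the continuous representative at $y$ via Riesz at $y$, and using the a.e.\ equality of $\widetilde p$ with $p$ together with symmetry. Joint measurability of $p$ on $(0,\infty)\times X\times X$ follows from separate continuity in each $X$-variable together with $t$-continuity, the latter coming from strong continuity of the semigroup combined with the $L^2\to C(X)$-continuity established above.

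For the positivity assertion, $p_t\ge 0$ everywhere because $p_t(x,\cdot)$ is continuous and nonnegative $m$-a.e.\ on the full-support measure. Set
$$ M:=\set{x\in X}{\widetilde p_t(x,\cdot)=0 \text{ in } L^2 \text{ for some (equivalently, every) } t>0}, $$
the equivalence using $\widetilde p_{t'}(x,\cdot)=e^{-(t'-t)L}\widetilde p_t(x,\cdot)$ for $t'>t$ together with the positivity improving hypothesis. Writing $M=\bigcap_{y\in X}\{x:p_{t_0}(x,y)=0\}$ for an arbitrary fixed $t_0>0$ and invoking continuity of $x\mapsto p_{t_0}(x,y)$ shows $M$ is closed. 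If $m(M)>0$ were true, then for a nontrivial $0\le f\in L^2(X,m)$ supported in $M$ one would obtain $(e^{-tL}f)(x)=0$ for all $x\in M$, contradicting strict $m$-a.e.\ positivity of $e^{-tL}f$; so $m(M)=0$. Finally, for $x,y\notin M$, positivity improving applied to $\widetilde p_{t/2}(x,\cdot)$ and $\widetilde p_{t/2}(y,\cdot)$ forces $\widetilde p_t(x,\cdot),\widetilde p_t(y,\cdot)>0$ $m$-a.e., whence
$$ p_{2t}(x,y)=\int_X \widetilde p_t(x,z)\,\widetilde p_t(y,z)\,dm(z)>0, $$
and relabeling the time parameter yields strict positivity for every $t>0$.

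The main technical obstacle is exactly this transition from the $m$-a.e.\ defined Riesz representative $\widetilde p_t(x,\cdot)$ to a pointwise kernel that is jointly symmetric, separately continuous, and strictly positive off $M\times M$; the whole construction rests on the smoothing $L^2\to C(X)$ provided by the semigroup and on factoring the kernel through two ``half-time'' copies via selfadjointness.
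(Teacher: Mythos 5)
Your proposal is correct and follows essentially the same route as the paper: closed graph theorem plus Riesz representation to obtain $\widetilde p_t(x,\cdot)$, the half-time factorization $p_t(x,y)=\langle \widetilde p_{t/2}(x,\cdot),\widetilde p_{t/2}(y,\cdot)\rangle$ yielding (K1)--(K3), separate continuity from weak continuity, and the same dichotomy producing the closed null set $M$. The only step you assert rather than prove is the joint measurability of $p$; this is exactly where the paper invokes separability, writing $p_t(x,y)=\sum_n \langle \widetilde p_{t/2}(x,\cdot),e_n\rangle\,\langle e_n,\widetilde p_{t/2}(y,\cdot)\rangle$ for a countable orthonormal basis $(e_n)$, so that $p$ is a countable sum of functions continuous in $(t,x)$ and in $(t,y)$ respectively -- an argument you could insert verbatim, since you already established the required weak continuity of $(t,x)\mapsto\widetilde p_t(x,\cdot)$.
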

\begin{proof}
Uniqueness of $p$ is clear from the continuity properties.  We prove existence:
For any continuous $\varphi \from X \to \R$ with compact support and any $t>0$, the operator $M_\varphi \+ e^{-t L}$ maps $L^2 (X,m)$ into $C_\b (X)$, the bounded continuous functions on $X$ (where $M_\varphi $ denotes the operator of multiplication by $\varphi$).
The closed graph theorem shows that $M_\varphi \+ e^{-t L} \from L^2 (X,m) \to C_\b (X)$ is continuous.  Then the Riesz representation theorem provides for each $x\in X$ and $t>0$ existence of a unique $k_t (x,\cdot)\in L^2 (X,m)$ with
 $$ e^{-t L} f(x) = \scpr{k_t(x,\cdot)}{f} = \int_X k_t (x,y) f(y) \,dm(y) $$
for all $f \in L^2 (X,m)$.

We now define
$$ p_t (x,y) := \int_X k_{t/2} (x,z) k_{t/2} (y,z) \,dm (z)
              = \scpr{k_{t/2} (x,\cdot)}{k_{t/2} (y,\cdot)} $$
for all $t>0$ and $x,y\in X$.
By construction, each $p_t$ is symmetric, hence (K1) is satisfied.  By construction, $p_t$ is continuous in $x$ and $y$ separately.

\smallskip

To show measurability of $p$, we first note that $(t,x)\mapsto k_t (x,\cdot)$ is weakly continuous.  Indeed, for any $f\in L^2 (X,m)$ and arbitrary fixed $s>0$ we have the continuity of
$$ (s,\infty)\times X \ni (t,x)
   \mapsto e^{-t L} f(x) = e^{-s L} \bigl( e^{- (t- s)L} f\bigr) (x) $$
as $t \mapsto e^{- (t - s)L} f \in L^2 (X,m)$ is continuous by the strong continuity of the semigroup and $(x,g)\mapsto e^{- s L} g (x)$ is continuous by the continuity of $M_\varphi \+ e^{-t L} \from L^2 (X,m)\to C_b (X)$ for all continuous $\varphi$ with compact support. Now, by the separability assumption, the space $L^2 (X,m)$ has an (at most) countable orthonormal basis $(e_n)$. Hence,
$$ p_t (x,y) = \sum_n \scpr{ k_{t/2} (x, \cdot)}{e_n} \scpr{e_n}{ k_{t/2} (y, \cdot) } $$
is measurable as a countable sum of continuous functions.  This concludes the proof of the measurability statement.

We now show that $p$ is indeed a kernel of the semigroup and satisfies (K2).  Let $s,t>0$ and $x\in X$. Then for any $f\in L^2 (X,m)$ we have
\begin{align*}
\scpr{k_{t+s}(x,\cdot)}{f}
 &= e^{-(t+s) L} f(x) = e^{-s L} e^{-t L} f(x) \\
 &= \scpr{k_s(x,\cdot)}{e^{-t L} f} = \scpr{e^{-t L} k_s(x,\cdot)}{f},
\end{align*}
so we obtain
 $$k_{t+s}(x,y) = e^{-t L} k_s(x,\cdot)(y) = \scpr{k_s(x,\cdot)}{k_t(y,\cdot)}$$
for a.e.\ $y\in X$, by the definition of $k_t(y,\cdot)$. In particular, $k_t (x,\cdot) = p_t (x,\cdot)$ a.e.\ by the definition of $p_t$. Then (K2) follows as well as
 $$ \int_X k_t (x,y) f(y) \,dm(y) = \int_X p_t (x,y) f(y) \,dm(y) = e^{-t L} f (x) $$
for any $f\in L^2 (X,m)$ and any $x\in X$.
Moreover, we now see that $p$ satisfies
 $$ p_{t+s}(x,y) = \scpr{p_s(x,\cdot)}{p_t(y,\cdot)} $$
for all $x,y\in X$ as, for fixed $x$, both sides are continuous in $y$ and agree for a.e.\ $y\in X$ as observed above. (Recall that $y \mapsto p_t(y,\cdot)$ is weakly continuous.)
By the symmetry of $p_s$ this is (K3), so $p$ has the desired properties.

\smallskip

It remains to show the statement on $M$:  We first show that for $x\in X$ the following alternative holds: Either $p_t (x,\cdot)$ is strictly positive for all $t>0$ or it vanishes identically for all $t>0$.

Indeed, if $p_t (x,\cdot)$ is not strictly positive for a $t>0$ and an $x\in X$, then there exists a non-negative $f$ in $L^2 (X,m)$ with $f \neq 0$ and $e^{-t L} f (x) = 0$, and hence
 $$ 0 = e^{-t L} f(x) = e^{- s L} (e^{- (t -s) L} f) (x) = \scpr{ p_s (x,\cdot)}{ e^{- (t -s)L} f}$$
 for all $0 < s < t$.
 As the semigroup is positivity improving, $e^{- (t - s)L} f$ is strictly positive for any $0 < s < t$ and we infer that $p_s (x,\cdot) = 0$ in $L^2 (X,m)$ for $0 < s < t$. By continuity of $p_s (x,\cdot)$ and (K3) we obtain $p_s (x,y) =0$ for all $y\in X$ and $s >0$.

We now define $M$ to be the set of $x\in X$ for which $p_1 (x,\cdot)$ vanishes identically. Then $M$ is closed by the continuity properties of $p_1$. Moreover, it has measure zero as $e^{- L}$ is positivity improving. Finally, by the alternative just discussed and (K1) and (K3) we have for $t>0$ and $x\notin M$ and $y\notin M$
that
$$p_t (x,y) = \scpr{p_{t/2} (x,\cdot)}{p_{t/2}(y,\cdot)} >0$$
holds. This gives the desired statement for $M$.
\end{proof}

\textbf{Remarks.} (a) The assumption that $e^{-t L}$ maps into $C(X)$ is satisfied whenever the domain of $L^N$ is contained in $C(X)$ for some $N>0$ (which holds true in `smooth' situations by local Sobolev theorems).

(b) The set $M$ appearing in the statement of the proposition can in general not be avoided (as can easily be seen by considering singular perturbations).

(c) In the situation of the proposition one can remove $M$ from $X$ without changing the associated $L^2$ space (as $M$ has measure zero). With $X$ replaced by $X\setminus M$ one is then exactly in the situation of Theorem~\ref{kernelversion} (compare with the first remark in this section).

(d) Related results on existence of kernels can be found e.g.\ in \cite{JS}. More specifically, Theorem~1.12 of that paper gives existence of kernels for sub-Markovian semigroups on $L^p$ whose adjoint is again sub-Markovian and whose range satisfies some semicontinuity properties. So, under the additional assumption of contractivity on $L^\infty$, the previous proposition could also be inferred from \cite{JS}.



\section{Admissible potentials}\label{Application}

In this section we assume that we are given a positivity improving selfadjoint semigroup. In this situation one can then study perturbations by potentials $V \from X\to \R$. Here, two types of perturbations are of particular interest. These are perturbations that are positive (or, more generally, bounded below) and perturbations that are negative (or, more generally, bounded above). It turns out that perturbations which are bounded below are `essentially harmless'. Details providing a more precise version of this statement are discussed in the appendix.

\smallskip

Here, we consider the much more subtle situation of perturbations arising from negative potentials. Our theorem below essentially generalises a result from Cabr\'{e} and Martel \cite{CM} for the heat equation on smooth bounded subdomains of Euclidean space to all selfadjoint positivity improving semigroups. Note that below the addition of a negative potential is performed by subtracting a positive potential.

\bigskip

Let $L$ be a selfadjoint operator in $L^2 (X,m)$ that is bounded below and assume that the semigroup $(e^{-tL})_{t\ge0}$ is positivity improving.
Let $V \from X\to [0,\infty)$ be a measurable potential. We then define the ``generalised ground state energy'' of $L-V$ by
 $$ \lambda_0(L,V) := \inf \set{Q(u,u)-\|V^{1/2}u\|_2^2}{u\in D(Q),\ \|u\|_2=1}. $$
Note that $\lambda_0(L,V) = -\infty$ is possible, for example, if $V^{1/2}u \notin L^2 (X,m)$ for some $u\in D(Q)$ or, more generally, whenever $V$ is not form bounded with respect to $Q$ with bound less or equal to one.

The crucial observation is that, for every $t\ge0$, the sequence $(e^{- t (L - \Vk)})$ of positive operators is increasing, i.e., $(e^{- t (L - \Vk)}f)$ is an increasing sequence in $L^2 (X,m)$ for each positive $f \in L^2 (X,m)$. (This is an easy consequence of the Trotter product formula.)
The potential $V$ is called \emph{admissible} if the strong limits
 $$S_V (t) := \operatorname{s-}\hspace{-0.45em}\lim\limits_{k\to\infty}
              e^{- t (L - \Vk)}$$
exist and form a $C_0$-semigroup, i.e., satisfy
\begin{itemize}
\item $S_ V(t + s) = S_V (s) S_V (t)$ for all $s,t>0$ and
\item $S_V (t)\to I$ strongly for $t\to 0$.
\end{itemize}
It is well known that admissibility of $V$ follows if the $S_V (t)$ are exponentially bounded (see the proof of the subsequent proposition as well). We refer to \cite{voi86,voi88} for the notion of admissibility.

\begin{prop}\label{admissible}
Let $L$ be a selfadjoint operator in $L^2 (X,m)$ that is bounded below and assume that the semigroup $(e^{-tL})_{t\ge0}$ is positivity improving. Let $V \from X \to [0,\infty)$ be measurable and let $E\in\R$. Then the following assertions are equivalent:
\begin{itemize}
\item[(i)] $V$ is admissible and $\|S_V(t)\| \le e^{-Et}$ for all $t\ge0$.

\item[(ii)] There exist $M>0$ and positive functions $f,g \in L^2(X,m)$ such that
 $$ \langle f, e^{-t(L-\Vk)}g \rangle \le Me^{-E t} $$
for all $t\ge0$ and $k\in\N$.

\item[(iii)] The inequality $\lambda_0(L,V) \ge E$ holds; in other words, $V+E \le L$ holds \emph{in the form sense}, i.e.,
 $$ \|V^{1/2}u\|_2^2 + E\|u\|_2^2 \le Q(u,u) $$
for all $u \in D(Q)$.
\end{itemize}
\end{prop}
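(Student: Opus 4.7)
The plan is to prove the cyclic implications (i) $\Rightarrow$ (ii) $\Rightarrow$ (iii) $\Rightarrow$ (i). The key preliminary observation is that for each $k$ the truncation $\Vk$ is bounded, so $L-\Vk$ is selfadjoint and bounded below, with form domain still equal to $D(Q)$ and associated form $Q_k(u,u)=Q(u,u)-\|(\Vk)^{1/2}u\|_2^2$; its semigroup $(e^{-t(L-\Vk)})_{t\ge 0}$ is itself positivity improving (adding a bounded non-negative multiplication operator to $L$ preserves this property, via the Trotter product formula together with the pointwise inequality $e^{-t(L-\Vk)}f\ge e^{-tL}f$ for positive $f$). Since $L-\Vk$ is selfadjoint, $\|e^{-t(L-\Vk)}\|=e^{-t\,E_0(L-\Vk)}$, and the standard form-characterization of the bottom of the spectrum identifies $E_0(L-\Vk)=\lambda_0(L,\Vk)$.

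For (i) $\Rightarrow$ (ii), I would simply pick any positive $f,g\in L^2(X,m)$. The sequence $e^{-t(L-\Vk)}g$ is increasing in $k$ with strong limit $S_V(t)g$, so
$$\langle f,e^{-t(L-\Vk)}g\rangle \le \langle f,S_V(t)g\rangle \le \|S_V(t)\|\,\|f\|_2\|g\|_2 \le e^{-Et}\|f\|_2\|g\|_2,$$
establishing (ii) with $M=\|f\|_2\|g\|_2$. The heart of the matter is (ii) $\Rightarrow$ (iii): since each semigroup $(e^{-t(L-\Vk)})_{t\ge 0}$ is selfadjoint and positivity improving, Theorem~\ref{positivityimprovingversion} applies to $L-\Vk$ and the positive pair $f,g$ from (ii), giving
$$-\lambda_0(L,\Vk)=\lim_{t\to\infty}\frac{\log\langle f,e^{-t(L-\Vk)}g\rangle}{t}\le -E,$$
so $\lambda_0(L,\Vk)\ge E$ for every $k$. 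Hence for every $u\in D(Q)$ with $\|u\|_2=1$ one has $\|(\Vk)^{1/2}u\|_2^2\le Q(u,u)-E$, and monotone convergence in $k$ yields $V^{1/2}u\in L^2(X,m)$ together with $\|V^{1/2}u\|_2^2+E\le Q(u,u)$, which is (iii).

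Finally, for (iii) $\Rightarrow$ (i) the pointwise inequality $\Vk\le V$ yields $\lambda_0(L,\Vk)\ge\lambda_0(L,V)\ge E$, hence $\|e^{-t(L-\Vk)}\|\le e^{-Et}$ uniformly in $k$. Combined with the monotone increase of $e^{-t(L-\Vk)}$ on positive elements, this uniform bound makes the strong limit $S_V(t)$ exist with $\|S_V(t)\|\le e^{-Et}$, and admissibility then follows from the classical Voigt criterion \cite{voi86,voi88} (exponential boundedness of the approximants). The chief obstacle is the middle implication: one must promote a mere upper bound on a single pair of matrix coefficients into a quadratic-form inequality valid on all of $D(Q)$. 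This is precisely what the Section~\ref{Positivity} insight, encapsulated in Theorem~\ref{positivityimprovingversion}, delivers: in the selfadjoint positivity improving setting, a single positive test pair already controls the bottom of the spectrum.
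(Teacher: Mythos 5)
Your proof is correct and follows essentially the same route as the paper: the decisive step is identical (apply Theorem~\ref{positivityimprovingversion} to the positivity improving semigroup of $L-\Vk$ with the positive pair $f,g$, turning the bound in (ii) into $E_0(L-\Vk)=\lambda_0(L,\Vk)\ge E$ for every $k$), and your (iii)$\Rightarrow$(i) argument is the paper's own. The only differences are in bookkeeping: you pass from ``$L-\Vk\ge E$ for all $k$'' to the form inequality for $V$ by an explicit monotone convergence argument where the paper invokes Proposition~5.7 of \cite{voi86}, and conversely you cite Voigt's criterion for the $C_0$-property of $S_V$ where the paper verifies strong continuity at $t=0$ by a direct norm computation.
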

\begin{proof}
Obviously, $\Vk$ is bounded and non-negative for any $k\in \N$. The assumptions on $L$ then give that the operator $L - V \wedge k$ is bounded below and generates a positivity improving semigroup. Thus,
if $f,g \in L^2(X,m)$ are positive and $k\in\N$, then
 $$ E_0(L-\Vk) = -\lim_{t\to \infty} \frac{\log \langle f, e^{-t(L-\Vk)} g\rangle}{t} $$
by Theorem~\ref{positivityimprovingversion}. Therefore, property~(ii) implies $E_0(L-\Vk) \ge E$ for all $k\in\N$, and thus
\begin{itemize}
\item[(ii')] $L-\Vk \ge E$ in the form sense for all $k\in\N$.
\end{itemize}
Conversely, if (ii') holds, then (ii) is valid with any positive functions $f,g \in L^2(X,m)$ and $M = \|f\|_2\|g\|_2$.

Now, the equivalence of (i), (ii') and (iii) is shown in Proposition~5.7 of \cite{voi86}. There the proof is given for the heat semigroup on $\R^n$ only, but literally the same proof carries over to the general case. This proves the desired equivalence.

For illustrative purposes we actually give a proof of ``(iii)$\Rightarrow$(i)'' here. While slightly longer than the proof in \cite{voi86}, our proof seems to be more elementary:  If (iii) holds, it will also hold with $V$ replaced by $\Vk$ for any $k\in \N$. As $\Vk$ is bounded, this gives
$$\|e^{-t(L-\Vk)}\| \le e^{-Et}$$
 for all $t\ge0$ and $k\in\N$.
Since for every positive $f \in L^2 (X,m)$ the sequence $(e^{- t (L - \Vk)}f)$ is increasing, it follows that the limit in the definition of $S_V(t)$ exists for each $t\ge0$ and $\|S_V(t)\| \le e^{-Et}$ for all $t\ge0$. Then, it is not hard to see that $S_V(t + s) = S_V (t) S_V (s)$ for all $s,t\geq 0$.
It remains to show that $S_V(t)f \to f$ in $L^2(X,m)$ as $t\to0$, for all $f \in L^2(X,m)$. By linearity we can assume without loss of generality that $f\ge0$. Note that $0 \le u(t) := e^{-tL}f \le S_V(t)f =: u_V(t)$ for all $t\ge0$. It follows that
 $$ \| u_V(t) - u(t) \|_2^2
    = \|u_V(t)\|_2^2 + \|u(t)\|_2^2 - 2\langle u_V(t), u(t) \rangle
  \le \|u_V(t)\|_2^2 - \|u(t)\|_2^2 $$
for all $t\ge0$. Moreover, $\|u(t)\|_2 \to \|f\|_2$ as $t\to0$ (since $u(t) \to f$ in $L^2(X,m)$) and $\|u_V(t)\|_2 \le e^{-Et}\|f\|_2$ for all $t\ge0$. We conclude that $u_V(t) - u(t) \to 0$ and hence $u_V(t) \to f$ in $L^2(X,m)$ as $t\to0$.
\end{proof}

It is possible to reformulate (parts of) the preceding theorem in terms of solutions of a corresponding abstract Cauchy problem and to thereby characterize when $\lambda_0 (L,V)>-\infty$ holds.  This is done next.
\smallskip

Let $\rho \in L^2(X,m)$ be strictly positive and let $f \in L^1(X,\rho m)$.
We say that $u \from [0,\infty) \to L^1(X,\rho m)$ is an \emph{approximated solution with respect to $\rho$} of the initial value problem
 $$ u'(t) + Lu(t) = Vu(t) \quad (t>0), \qquad u(0) = f \qquad (\heartsuit) $$
if there exists a sequence $(f_k)$ in $L^2(X,m)$ such that $0\le f_k\uparrow f$ $m$-almost everywhere, $e^{-t(L-\Vk)}f_k \to u(t)$ in $L^1(X,\rho m)$ as $k\to\infty$, for all $t\ge0$, and $u(t) \to f$ as $t\to0$. Usually, we will suppress the dependence on $\rho$ when speaking about approximated solutions. By definition, any approximated solution is continuous at $t=0$.
We also note that $L^2(X,m) \subseteq L^1(X,\rho m)$ since $\rho \in L^2(X,m)$.

\begin{theorem}\label{adm-cor}
Let $L$ be a selfadjoint operator in $L^2 (X,m)$ that is bounded below and assume that the semigroup $(e^{-tL})_{t\ge0}$ is positivity improving. Let $V \from X \to [0,\infty)$ be measurable. Then $\lambda_0(L,V) > -\infty$ if and only if $(\heartsuit)$ has an approximated solution that is exponentially bounded in $L^1(X,\rho m)$ for some strictly positive $\rho \in L^2(X,m)$ and some positive $f \in L^1(X,\rho m)$. Moreover, if $\lambda_0 (L,V)>-\infty$, then, for any positive $f \in L^2 (X,m)$, there exists an approximated solution $u\from [0,\infty)\longrightarrow L^2 (X,m)$ which is continuous and exponentially bounded.
\end{theorem}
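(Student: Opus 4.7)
\smallskip

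\noindent\textbf{Proof proposal.} My plan is to reduce both implications to Proposition~\ref{admissible}, using the monotonicity (in $k$) of the positive operators $e^{-t(L-\Vk)}$ and of the approximating sequence $(f_k)$.

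For the ``if'' direction, suppose $(\heartsuit)$ has an approximated solution $u$ with respect to some strictly positive $\rho\in L^2(X,m)$ and some positive $f$, with $\|u(t)\|_{L^1(X,\rho m)}\le Me^{ct}$ for all $t\ge0$. Let $(f_k)\subset L^2(X,m)$ with $0\le f_k\uparrow f$ be the sequence from the definition. Since $f$ is positive, $f_{k_0}$ is positive for some $k_0\in\N$, and I fix such a $k_0$. The first step is to derive the $k$-uniform bound
$$\langle\rho, e^{-t(L-\Vk)}f_{k_0}\rangle\le Me^{ct}\quad(t\ge0,\ k\in\N).$$
For $k\ge k_0$, monotonicity of the positivity preserving operator $e^{-t(L-\Vk)}$ gives $e^{-t(L-\Vk)}f_{k_0}\le e^{-t(L-\Vk)}f_k$; pairing with the non-negative $\rho$ (which lies in $L^2$, so Cauchy--Schwarz turns $L^1(X,\rho m)$ convergence into boundedness of $\langle\rho,\cdot\rangle$) and using that $e^{-t(L-\Vk)}f_k$ is increasing in $k$ with limit $u(t)$ in $L^1(X,\rho m)$ yields $\langle\rho, e^{-t(L-\Vk)}f_k\rangle\uparrow \|u(t)\|_{L^1(X,\rho m)}\le Me^{ct}$. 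For $k<k_0$, the inequality $\Vk\le V\wedge k_0$ gives $e^{-t(L-\Vk)}\le e^{-t(L-V\wedge k_0)}$ on positive functions, so the bound reduces to the case $k=k_0$ just handled.

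This uniform bound is precisely condition~(ii) of Proposition~\ref{admissible} with $E:=-c$ and with the positive $L^2$-functions $\rho$ and $f_{k_0}$. Hence $\lambda_0(L,V)\ge-c>-\infty$, completing one implication.

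For the converse implication together with the ``moreover'' assertion, suppose $\lambda_0(L,V)>-\infty$ and set $E:=\lambda_0(L,V)$. Proposition~\ref{admissible} (iii$\Rightarrow$i) yields that $V$ is admissible and $\|S_V(t)\|\le e^{-Et}$ for all $t\ge0$. Given any positive $f\in L^2(X,m)$, I take the constant sequence $f_k:=f$ (which trivially satisfies $0\le f_k\uparrow f$) and define $u(t):=S_V(t)f$. By the definition of $S_V$, $e^{-t(L-\Vk)}f\to u(t)$ in $L^2(X,m)$, and since $\rho\in L^2(X,m)$ the bound $\|g\|_{L^1(X,\rho m)}\le\|g\|_2\|\rho\|_2$ turns this into $L^1(X,\rho m)$-convergence for any strictly positive $\rho\in L^2(X,m)$. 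The $C_0$-semigroup property of $(S_V(t))$ provides continuity of $u$ on $[0,\infty)$, the initial condition $u(t)\to f$ as $t\to0$, and the exponential bound $\|u(t)\|_2\le e^{-Et}\|f\|_2$. As $L^2(X,m)\subset L^1(X,\rho m)$, this simultaneously establishes the existence of an exponentially bounded approximated solution (needed for the ``only if'' direction of the equivalence) and the ``moreover'' statement.

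The main obstacle is the first step: extracting a $k$-uniform exponential bound on $\langle\rho,e^{-t(L-\Vk)}f_{k_0}\rangle$ from an exponential bound on $\|u(t)\|_{L^1(X,\rho m)}$, because the sequence $(f_k)$ varies with $k$. The monotonicity trick with the separate treatment of $k\ge k_0$ and $k<k_0$ is what bridges this gap; everything else then follows from Proposition~\ref{admissible} and basic $C_0$-semigroup facts.
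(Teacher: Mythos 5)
Your proposal is correct and follows essentially the same route as the paper: sufficiency by extracting the $k$-uniform bound $\langle\rho, e^{-t(L-\Vk)}f_{k_0}\rangle \le Me^{ct}$ from the exponential bound on $\|u(t)\|_{L^1(X,\rho m)}$ via the monotonicity in $k$, then invoking Proposition~\ref{admissible} (ii)$\Rightarrow$(iii); necessity and the ``moreover'' part by admissibility and $u(t)=S_V(t)f$. The only cosmetic difference is that the paper assumes $f_1\ne 0$ without loss of generality and uses $f_1$ where you pick $k_0$ and treat $k<k_0$ separately.
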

\begin{proof}
For the proof of sufficiency, let $u$ be the presumed approximated solution. Let $(f_k)$ be the sequence in $L^2(E,m)$ approximating $f$, with $f_1 \ne 0$ without loss of generality, and let $M>0$ and $E\in\R$ be such that $\|u(t)\|_{L^1(X,\rho m)} \le Me^{-Et}$ for all $t\ge0$. Then, from the monotonicity of $(e^{- t (L - \Vk)})$, it follows that
 $$ \langle \rho, e^{-t(L-\Vk)}f_1 \rangle
    \le \langle \rho, u(t) \rangle
    = \|u(t)\|_{L^1(X,\rho m)}
    \le Me^{-Et} $$
for all $t\ge 0$ and $k\in\N$.
Since $f_1$ is positive, it follows from Proposition~\ref{admissible} that $\lambda_0(L,V) \ge E > -\infty$.

Necessity is clear: If $\lambda_0(L,V) > -\infty$, then $V$ is admissible by Proposition~\ref{admissible}. Hence, for every positive $f \in L^2(X,m)$, the map $t\mapsto S_V (t) f$ provides an approximated solution to $(\heartsuit)$ that is continuous and exponentially bounded in $L^2(X,m)$.
\end{proof}

The above characterization of $\lambda_0(L,V) > -\infty$ is phrased in terms of the existence of approximated solutions. It is possible to replace approximated solutions by suitable weak solutions. This is discussed next.

\smallskip

We say that $u\from[0,\infty)\times X\to\R$ is a \emph{weak solution with respect to $\rho$} of $(\heartsuit)$ if $(1+V)u\rho \in L^1((0,T)\times X)$ for any $T>0$ and
$$ \int_0^\infty \! \int u(t)\bigl(L\varphi(t)-\varphi'(t)\bigr)\,dm\,dt
   = \int f\varphi(0)\,dm + \int_0^\infty \! \int Vu(t)\varphi(t)\,dm\,dt $$
for all compactly supported $C^\infty$ functions $\varphi\from[0,\infty)\to D(L)$ satisfying
$$ |\varphi(t)|, |\varphi'(t)|, |L\varphi(t)| \le c\rho \qquad (t\ge0) $$
for some $c\ge0$.
Such functions $\varphi$ will be called \emph{test functions with respect to $\rho$}.
Usually we will suppress the dependence on $\rho$ when speaking about weak solutions.

\medskip

\textbf{Remarks.}
(a) It suffices to assume the above for all compactly supported $C^\infty$ functions $\varphi\from[0,\infty)\to\bigcap_{k\in\N} D(L^k)$. To see this, replace $\varphi(t)$ with $e^{-\eps L} \varphi(t)$ and let $\eps\to0$.

(b) The definition includes the setting of Cabr\'e and Martel \cite{CM} as a special case. In fact, they have the assumptions that $u, Vu\rho \in L^1((0,T)\times X)$, which are more restrictive since they work with a bounded weight $\rho$.

\medskip

\begin{lemma}
\textup{(a)} Assume that there exists $c\ge1$ such that $e^{-t L}\rho \le c\rho$ for all $t\in[0,1]$.
If $0\le f\in L^1(X,\rho m)$ and $(\heartsuit)$ has a non-negative weak solution $u\from [0,\infty) \longrightarrow L^1 (X,\rho m)$ which is continuous at $t=0$, then there exists an approximated solution $v$ with $0\le v\le u$.

\textup{(b)} Assume that there exists $c>0$ such that $e^{-t L}\rho \ge c\rho$ for all $t\in[0,1]$ and $e^{-L}\rho \le \frac1c \rho$.
Then every locally bounded approximated solution $u\from [0,\infty) \longrightarrow L^1 (X, \rho m)$ is a weak solution.
\end{lemma}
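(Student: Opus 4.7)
The plan is to use the truncations $V\wedge k$ as a bridge between the well-behaved bounded perturbations $L-V\wedge k$ and the full problem. For (a) the approximated solution will be constructed as a monotone limit and shown to sit under $u$ by a weak maximum principle; for (b) each truncated approximant already satisfies a weak equation, so the task is to pass to the limit $k\to\infty$. In both parts the hypotheses on $\rho$ are exactly what is needed to produce admissible test functions with the required pointwise bounds.

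For part (a), I would first choose $0\le f_k\uparrow f$ with $f_k\in L^2(X,m)$ and set $v_k(t):=e^{-t(L-V\wedge k)}f_k$. By the monotonicity of $(e^{-t(L-V\wedge k)})_k$ noted earlier and $f_k\uparrow f$, the sequence $v_k(t)$ is pointwise increasing in $k$, producing a candidate limit $v$. The heart of the argument is then to show $v_k\le u$: heuristically $u$ is a weak super-solution of the truncated problem with non-negative source $(V-V\wedge k)u$ and initial data $f\ge f_k$, so it should dominate the semigroup solution. To realise this at the level of weak solutions, the plan is to test the $u$-equation against $\varphi(t):=\chi(t)\,e^{-(T-t)(L-V\wedge k)}\phi$ for $0\le\phi\le\rho$ in $L^2$ and a smooth cutoff $\chi$ approximating $\ind_{[0,T]}$; since $L\varphi-\varphi' = \chi\,(V\wedge k)\,e^{-(T-t)(L-V\wedge k)}\phi-\chi'\,e^{-(T-t)(L-V\wedge k)}\phi$, the $(V\wedge k)$-parts cancel against $Vu\varphi$ up to the non-negative remainder, and in the limit $\chi\to\ind_{[0,T]}$ one should obtain
$$ \langle u(T),\phi\rangle = \langle f,e^{-T(L-V\wedge k)}\phi\rangle + \int_0^T\!\!\int (V-V\wedge k)\,u\,e^{-(T-t)(L-V\wedge k)}\phi\,dm\,dt \ge \langle v_k(T),\phi\rangle, $$
from which $v_k\le u$ pointwise follows by varying $\phi$. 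To see that $\varphi$ is admissible I would use the hypothesis $e^{-tL}\rho\le c\rho$: a Duhamel expansion for $g(t):=e^{-t(L-V\wedge k)}\rho$ combined with Gronwall yields a pointwise bound $g(t)\le C(k,T)\rho$ on $[0,T]$, and this controls $|\varphi|$, $|\varphi'|$, $|L\varphi|$ in terms of $\rho$. Continuity of $v$ at $t=0$ finally follows by squeezing $v_k(t)\le v(t)\le u(t)$, using that $v_k(t)\to f_k$ in $L^2\hookrightarrow L^1(X,\rho m)$ and $u(t)\to f$ by hypothesis.

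For part (b), each $u_k(t):=e^{-t(L-V\wedge k)}f_k$ is a classical $L^2$-solution of $u_k'+Lu_k=(V\wedge k)u_k$, so a direct integration by parts against any admissible $\varphi$ gives the weak equation with $V\wedge k$ in place of $V$. The key preparatory step is to produce a single non-negative $\psi\in D(L)$ comparable to $\rho$ from both sides with $|L\psi|\le C\rho$. I propose to take $\psi:=\int_0^1 e^{-sL}\rho\,ds$: the identity $e^{-L}\rho=e^{-sL}(e^{-(1-s)L}\rho)\ge c\,e^{-sL}\rho$ (using $e^{-(1-s)L}\rho\ge c\rho$ and positivity) together with $e^{-L}\rho\le\tfrac1c\rho$ gives $e^{-sL}\rho\le\tfrac1{c^2}\rho$ on $[0,1]$, while the lower bound itself gives $\psi\ge c\rho$; moreover $L\psi=\rho-e^{-L}\rho$, so $|L\psi|\le(1+\tfrac1c)\rho$. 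Testing the $u_k$-weak equation against $\chi(t)\psi(x)$ for a non-negative time cutoff $\chi$ equal to $1$ on $[0,T]$ and isolating the source will bound $\int_0^\infty\!\!\int(V\wedge k)u_k\chi\psi\,dm\,dt$ uniformly in $k$ in terms of $\|u(t)\|_{L^1(X,\rho m)}$; monotone convergence then gives $\int_0^T\!\!\int Vu\,\rho\,dm\,dt<\infty$, i.e.\ $(1+V)u\rho\in L^1((0,T)\times X)$. With this integrability in hand, passing $k\to\infty$ in the $u_k$-weak equation against an arbitrary admissible $\varphi$ is a routine dominated/monotone convergence exercise, with dominations $u\cdot|L\varphi-\varphi'|\le 2cu\rho$, $f\cdot|\varphi(0)|\le cf\rho$ and $Vu\cdot|\varphi|\le cVu\rho$, all in $L^1((0,T)\times X)$.

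The main obstacle is the comparison step in (a): while the super-solution heuristic is immediate, implementing it at the weak-solution level requires a dual test function $\varphi$ whose pointwise control by $\rho$ must be harvested through the Duhamel/Gronwall loop, and it is here that the one-sided hypothesis $e^{-tL}\rho\le c\rho$ is used in an essential way. By contrast, the two-sided hypothesis of (b) is exactly what makes $\psi=\int_0^1 e^{-sL}\rho\,ds$ a ready-made admissible test function, after which the convergence argument in (b) is mechanical.
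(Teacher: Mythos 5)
Your part (b) is correct and is essentially the paper's own argument: the same choice $\int_0^1 e^{-sL}\rho\,ds$ as a spatial test profile, the same identity $L\int_0^1 e^{-sL}\rho\,ds=\rho-e^{-L}\rho$, the same use of the two-sided hypothesis to get $c\rho\le\int_0^1 e^{-sL}\rho\,ds\le\tfrac1{c^2}\rho$, monotone convergence for $Vu\rho\in L^1$, and dominated convergence for the passage $k\to\infty$ against a general test function.

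In part (a), however, the key comparison step $v_k\le u$ has a genuine gap. You test the weak equation for $u$ against $\varphi(t)=\chi(t)\,e^{-(T-t)(L-V\wedge k)}\phi$, but this $\varphi$ is not (and in general cannot be shown to be) a test function with respect to $\rho$ in the sense required by the hypothesis: the definition demands the three \emph{separate} pointwise bounds $|\varphi(t)|,|\varphi'(t)|,|L\varphi(t)|\le c\rho$, and while the Duhamel/Gronwall (or Trotter) estimate $e^{-s(L-V\wedge k)}\varphi_0\le e^{sk}c^{1+s}\rho$ does control $|\varphi|$, it does not control $|\varphi'|$ or $|L\varphi|$: both contain the term $(L-V\wedge k)e^{-(T-t)(L-V\wedge k)}\phi$, i.e.\ the generator applied to the semigroup, which is merely an $L^2$ function with no pointwise domination by $\rho$ (positivity arguments only dominate the semigroup itself, not its time derivative). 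Only the combination $L\varphi-\varphi'$ is nicely bounded, but the weak-solution hypothesis is only asserted for admissible test functions, so you are not entitled to plug your $\varphi$ in. The paper's fix is precisely a time mollification: one takes $\varphi(t)=\int_0^t\psi(t-r)\,e^{-r(L-V\wedge k)}\varphi_0\,dr$ with $0\le\psi\in C_c^\infty(0,\infty)$ and then reflects at $T$; the convolution throws the $t$-derivative onto $\psi$, so $|\varphi'|\le\|\psi'\|_\infty\int_0^t e^{-r(L-V\wedge k)}\varphi_0\,dr\le c_1\rho$ and $|L\varphi|\le\|\psi\|_\infty\rho+k\varphi+|\varphi'|$, making all three bounds hold. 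This device also removes a second weak point of your argument: letting $\chi\to\ind_{[0,T]}$ to produce the boundary term $\langle u(T),\phi\rangle$ needs continuity (or a Lebesgue-point argument) of $t\mapsto\langle u(t),\phi\rangle$ at $t=T$, which is not among the hypotheses ($u$ is only assumed continuous at $t=0$); the paper instead concludes $u\ge u_k$ directly from the mollified inequality
\begin{equation*}
\int_0^T\psi(T-t)\int u(t)\varphi_0\,dm\,dt\ \ge\ \int_0^T\psi(T-t)\int e^{-t(L-V\wedge k)}f_k\cdot\varphi_0\,dm\,dt,
\end{equation*}
valid for all $\psi$, $\varphi_0\le\rho$ and $T$, never evaluating $u$ at a sharp time. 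The rest of your part (a) (monotone limit $v=\lim v_k$, squeeze argument for continuity at $t=0$) coincides with the paper and is fine once the comparison is repaired along these lines.
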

\begin{Proof}
(a) Let $(f_k)$ be a sequence in $L^2(X,m)$ such that $0\le f_k\uparrow f$ $m$-almost everywhere. For $k\in\N$ we define $u_k(t) := e^{-t (L-\Vk)}f_k$. We are going to show that $u_k \le u$ for all $k\in\N$; then it follows that the limit $v := \lim_{k\to\infty} u_k$ exists and $v\le u$. Moreover, the convergence $v(t) \to f$ as $t\to0$ can then be seen as follows: given a fixed $k\in\N$ we have $u_k(t) \le v(t) \le u(t)$ for all $t>0$ and hence $|v(t)-f| \le \max\bigl\{ u(t)-f, f-u_k(t) \bigr\}$, so
 $$ \| v(t)-f \|_{L^1(\rho m)} \le \| u(t)-f \|_{L^1(\rho m)} + \| f-f_k \|_{L^1(\rho m)} + \| f_k-u_k(t) \|_{L^1(\rho m)}. $$
The middle term on the right hand side is small for large $k$, and the first and third terms converge to $0$ as $t\to0$. Thus, $v$ is an approximated solution.

For the proof of $u_k \le u$ fix $k\in\N$, a measurable function $0\le \varphi_0\le\rho$ and a compactly supported function $0\le\psi\in C^\infty(0,\infty)$. We define $\varphi\from[0,\infty)\to D(L)$ by
 $$ \varphi(t) := \int_0^t \psi(t-r) e^{-r (L-\Vk)}\varphi_0\,dr. $$
Then $\varphi'(t) = \int_0^t \psi'(t-r) e^{-r (L-\Vk)}\varphi_0\,dr$ as $\psi(0) = 0$.
Note that the assumtion implies $e^{-rL}\rho \le c^{1+r}\rho$ for all $r\ge0$.
Therefore, $e^{-r (L-\Vk)}\varphi_0 \le e^{-r (L-k)}\varphi_0 \le e^{rk} c^{1+r}\rho$, and we infer that there exists $c_1>0$ such that
 $$ \varphi(t),|\varphi'(t)| \le c_1 \int_0^t c^{1+r} e^{rk} \,dr \cdot \rho $$
for all $t \ge 0$. On the other hand, $\varphi(t) = \int_0^t \psi(r) e^{-(t-r) (L-\Vk)}\varphi_0\,dr$ and hence $\varphi'(t) = \psi(t)\varphi_0 - (L-\Vk)\varphi(t)$, so
 $$ |L\varphi(t)| \le \|\psi\|_\infty\rho + k\varphi(t) + |\varphi'(t)|. $$

We fix $T>0$ and define $\varphi_T \from [0,\infty) \to D(L)$ by
 $$ \varphi_T(t) := \varphi(T-t)\ \ \text{for}\ 0\le t\le T, \qquad \varphi_T(t) := 0 \ \ \text{for}\ t>T. $$
By the above estimates, $\varphi_T$ is a test function with respect to $\rho$.
Moreover, $ \varphi_T'(t) = -\psi(T-t)\varphi_0 + (L-\Vk)\varphi(T-t) $
for all $0\le t\le T$, so from $u$ being a weak solution we obtain
\begin{gather*}
\int_0^T \! \int u(t) \bigl( (\Vk) \varphi(T-t) + \psi(T-t)\varphi_0 \bigr)\,dm\,dt \\
\qquad
= \int f \varphi(T)\,dm + \int_0^T \! \int Vu(t) \varphi(T-t)\,dm\,dt.
\end{gather*}
Therefore, since $u,\varphi\ge0$,
 $$ \int_0^T \! \int u(t) \psi(T-t) \varphi_0 \,dm\,dt \ge \int f \varphi(T)\,dm. $$
By the definition of $\varphi(T)$, the estimate $f \ge f_k$ and the self-adjointness of $e^{-t (L-\Vk)}$ it follows that
\begin{align*}
\int_0^T \psi(T-t) \int u(t) \varphi_0 \,dm\,dt
 &\ge \int_0^T \! \int f_k \psi(T-t) e^{-t (L-\Vk)}\varphi_0\,dm\,dt \\
 &= \int_0^T \psi(T-t) \int e^{-t (L-\Vk)} f_k\cdot\varphi_0\,dm\,dt.
\end{align*}
This inequality holds for any $0\le\psi\in C^\infty(0,\infty)$, any measurable function $0\le\varphi_0\le\rho$
and any $T>0$. Recalling $u_k(t) = e^{-t (L-\Vk)}f_k$, we conclude that $u\ge u_k$.

\smallskip

(b) Let $0\le f_k\uparrow f$ as in the definition of the approximated solution. For each $k\in\N$, the function $u_k$ defined by $u_k(t) := e^{-t (L-\Vk)}f_k$ is a strong $L^2(X,m)$-solution of the equation
 $$ u_k'(t) + Lu_k(t) = (\Vk)u_k(t) \quad (t>0), \qquad u_k(0) = f_k $$
and hence also a weak solution. Note that $u\rho \in L^1((0,T)\times X)$ for all $T>0$ since $u$ is locally bounded. Thus, for any test function $\varphi$ with respect to $\rho$, the Lebesgue convergence theorem yields
\begin{align*}
\int_0^\infty \! \int (\Vk)&u_k(t)\varphi(t)\,dm\,dt \\
= \int_0^\infty & \! \int u_k(t)\bigl(L\varphi(t)-\varphi'(t)\bigr)\,dm\,dt - \int f_k\varphi(0)\,dm \\
\longrightarrow \int_0^\infty & \! \int u(t)\bigl(L\varphi(t)-\varphi'(t)\bigr)\,dm\,dt - \int f\varphi(0)\,dm
\end{align*}
as $k\to\infty$.  It remains to show that $Vu\rho \in L^1((0,T)\times X)$ for all $T>0$; then it follows that $u$ is a weak solution.

Let $\varphi(t) :=  \psi(t) \cdot \int_0^1 e^{-s L}\rho\,ds \ge c\psi(t)\rho$ for all $t\ge0$, with $0\le\psi\in C_c^\infty[0,\infty)$. Then $L\varphi(t) = \psi(t)(\rho-e^{-L}\rho)$, so the assumptions imply that $\varphi$ is a test function with respect to $\rho$. By the monotone convergence theorem we conclude from the above that $Vu\rho \in L^1((0,T)\times X)$, which proves the assertion.
\end{Proof}

Combining Theorem~\ref{adm-cor} with the previous lemma, we immediately obtain the following.

\begin{coro}
Let $L$ be a selfadjoint operator in $L^2 (X,m)$ that is bounded below and assume that the semigroup $(e^{-tL})_{t\ge0}$ is positivity improving. Let $V \from X \to [0,\infty)$ be measurable. Then the following holds:

\textup{(a)} Assume that, for a positive $f \in L^1(X,\rho m)$ and for some strictly positive $\rho \in L^2 (X)$ satisfying $e^{-t L}\rho \le c\rho$ for a $c\ge1$ and all $t\in[0,1]$,
there exists a weak solution with respect to $\rho$ that is exponentially bounded in $L^1(X,\rho m)$ and continuous at $t=0$. Then $\lambda_0 (L,V) > -\infty$ holds.

\textup{(b)} Assume that there exists $c>0$ such that $e^{-t L}\rho \ge c\rho$ for all $t\in[0,1]$ and $e^{-L}\rho \le \frac1c \rho$. If $\lambda_0(L,V) > -\infty$ holds, then for every positive $f \in L^2 (X)$ there exists a positive weak solution (with respect to $\rho$) which is continuous and exponentially bounded in $L^2 (X,m)$.
\end{coro}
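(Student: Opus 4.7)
The plan is to derive both assertions directly by combining Theorem~\ref{adm-cor} with the two parts of the preceding lemma; the substantive analytical content has already been established in those two results, and what remains is essentially to check that the hypotheses match up.

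For part (a), start from the given non-negative weak solution $u$ with respect to $\rho$, which is continuous at $t=0$ and satisfies $\|u(t)\|_{L^1(X,\rho m)} \le Me^{-Et}$ for some constants $M$ and $E$. The assumption $e^{-tL}\rho \le c\rho$ for $t\in[0,1]$ with $c\ge 1$ is precisely what part~(a) of the preceding lemma requires, so applying that lemma produces an approximated solution $v$ with $0 \le v \le u$. The pointwise domination transfers the exponential $L^1(X,\rho m)$ bound from $u$ to $v$, and an application of the sufficiency direction of Theorem~\ref{adm-cor} to this $v$ then yields $\lambda_0(L,V) > -\infty$.

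For part (b), invoke the necessity direction of Theorem~\ref{adm-cor}: under the assumption $\lambda_0(L,V) > -\infty$, for every positive $f \in L^2(X,m)$ the map $u(t) := S_V(t)f$ is an approximated solution $u \from [0,\infty) \to L^2(X,m)$ that is continuous and exponentially bounded in $L^2(X,m)$. The Cauchy--Schwarz estimate $\|u(t)\|_{L^1(X,\rho m)} \le \|\rho\|_2\|u(t)\|_2$ (using $\rho\in L^2(X,m)$) shows that $u$ is in particular locally bounded as an $L^1(X,\rho m)$-valued map. The hypotheses on $\rho$ stated in part~(b) of the corollary are exactly those of part~(b) of the preceding lemma, which therefore identifies $u$ as a weak solution with respect to $\rho$. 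Positivity of $u(t)$ is inherited by monotone convergence from the approximants $e^{-t(L-\Vk)}f_k$, each of which is non-negative (in fact strictly positive for large $k$, since $L-\Vk$ generates a positivity improving semigroup); the exponential bound on $u$ in $L^2(X,m)$ survives the embedding $L^2(X,m) \hookrightarrow L^1(X,\rho m)$, so all the required conclusions hold. No genuine obstacle arises in either direction, because the two hard steps---producing an approximated solution from a weak one, and turning an approximated solution back into a weak one---have already been carried out in the lemma.
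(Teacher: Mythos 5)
Your proposal is correct and follows exactly the route the paper intends: the corollary is obtained by combining Theorem~\ref{adm-cor} with the two parts of the preceding lemma, and your additional checks (transferring the exponential $L^1(X,\rho m)$ bound via $0\le v\le u$, deducing local boundedness of $S_V(\cdot)f$ in $L^1(X,\rho m)$ from the $L^2$ bound and $\rho\in L^2(X,m)$, and positivity by monotone approximation) are precisely the routine verifications the paper leaves implicit.
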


\medskip

The previous corollary can be understood as an abstract version of a result due to Cabr\'{e} and Martel \cite{CM}. This is discussed next.

\begin{example}
In \cite{CM}, Cabr\'{e} and Martel study the existence of positive exponentially bounded solutions of the heat equation with a potential in the following setting:
Let $X$ be a smooth bounded subdomain of $\R^n$ and let $\delta \in L^2(X)$ be defined by $\delta(x) = \dist(x,\partial X)$.
 Let $\Delta_D$ be the Dirichlet Laplacian in $L^2(X)$ and set $L := -\Delta_D$. Then $-L$ generates a positivity improving semigroup on $L^2(X)$.  Moreover, $L$ admits a ground state $f_0$, i.e., a normalized strictly positive eigenfunction to the ground state energy. By Section~3 of \cite{Dav}, the estimate
$c^{-1}\delta \le f_0 \le c\delta$
holds for some $c\ge1$, and this easily gives $e^{-t L} \delta \le c^2 \delta$ for all $t\ge 0$, as well as $e^{-t L} \delta \ge C \delta$ for all $t\in [0,1]$ with a suitable $C>0$.  This shows that $\delta$ satisfies all of the assumptions imposed on the density function $\rho$ above.

Let $V\from X \to [0,\infty)$ be measurable, let $0 \le f \in L^1(X,\delta m)$ be locally integrable and consider the initial value problem
 $$ \partial_t u - \Delta_D u = Vu, \quad u(0) = f. \qquad (\diamondsuit) $$
Then any weak solution of $(\diamondsuit)$ is continuous (as a function with values in $L^1(X,\delta m)$, see \cite{CM}) and the assumptions of the previous corollary are satisfied (with $\rho = \delta$). We thus obtain the following:
\begin{itemize}
\item[(1)] If $(\diamondsuit)$ has a positive weak solution that is exponentially bounded in $L^1(X,\delta m)$, for some positive $f \in L^1(X,\delta m)$, then $\lambda_0(-\Delta_D,V) > -\infty$.
\item[(2)] If $\lambda_0(-\Delta_D,V) > -\infty$, then $(\diamondsuit)$ has a positive weak solution that is exponentially bounded in $L^2(X)$, for every positive $f \in L^2(X)$.
\end{itemize}
With (1) and (2) we have obtained Theorem~1 of \cite{CM} as a special case of the previous corollary.
\end{example}

\bigskip

\textbf{Remarks.} (a) A proof of (1) and (2) in the previous example may also directly be based on Theorem~\ref{adm-cor}:
As is discussed in \cite{CM}, at the bottom of p.\,976, the initial value problem $ (\diamondsuit) $
has an approximated solution with respect to $\delta$ if and only if it has a positive weak solution with respect to $\delta$ (and if positive weak solutions exist, then the approximated solution is the minimal one). Now, given this equivalence, (1) and~(2) are immediate from Theorem~\ref{adm-cor}.

(b)
For Kolmogorov operators on weighted spaces $L^2(\R^n,\rho)$ a result analogous to the result in the previous example is known due to recent work of Goldstein/Goldstein/Rhandi (see Section~2 of \cite{GGR}).  Our Theorem~\ref{adm-cor} can also be used to reobtain the result of \cite{GGR} along lines similar to the ones given in part (a) of the remark.

\section{Examples}\label{Examples}

In this section we want to present some examples for which all of the assumptions of Section~\ref{Semigroups} are satisfied and hence all of the results of the previous sections hold.

\smallskip

We emphasize that non-negative potentials (satisfying some weak growth conditions) could be added to all of the operators $L$ below and the resulting operators would still generate positivity improving semigroups, see Corollary~\ref{Stability}.

\bigskip

\subsection{The Laplace-Beltrami operator on a manifold.}
The Laplace-Beltrami operator on a connected Riemannian manifold gives rise to a positivity improving semigroup with a continuous (and even $C^\infty$) kernel and the results of the previous sections hold.
For this example, validity of part (a) of Theorem~\ref{kernelversion} has already been proved by Chavel/Karp in \cite{CK}, with substantial later simplifications by Simon in \cite{Sim}. In fact, as mentioned already, part of our treatment in Section~\ref{Semigroups} is a rather direct adaption of these treatments.
Part (b) of Theorem~\ref{kernelversion} has been obtained by Li \cite{Li}. We refrain from discussing further details and refer to the mentioned literature.

\bigskip

\subsection{Laplacians on metric graphs}
Laplacians on metric graphs (also known as quantum graphs) have attracted considerable
interest in both physics and mathematics in recent years (see e.g.\ the articles \cite{Kuchment-04,Kuchment-05,KostrykinS-99b,KostrykinS-00b} and the conference proceedings \cite{BCFK, EKKSTG} and the references therein). While several variants and notations can be found in the literature, the basic setting is as follows (see \cite{LSS} for further details and proofs):
\begin{definition}
A \emph{metric graph} is a quintuple $\Gamma=(E,V,i,j,l)$ where
\begin{itemize}
\item $E$ (edges) and $V$ (vertices) are countable sets,
\item $l\from E\to (0,\infty)$ defines the length of the edges,
\item $i\from E\to V$ defines the initial point of an edge and
      $j\from \set{ e\in E }{ l(e)<\infty }\to V$ the end point for edges of finite
      length.
\end{itemize}
For $e\in E$ we set $X_e := \{e\}\times(0,l(e))$.
Moreover, we set $\overline{X_e} := X_e \cup \{i(e),j(e)\}$\, ($\overline{X_e} := X_e \cup \{i(e)\}$ if $l(e) = \infty$) and $X := X_\Gamma = V\cup\bigcup_{e\in E}X_e$.
\end{definition}
Thus, $X_e$ is essentially just the interval $(0,l(e))$, and the first
component is only added to force the mutual disjointness of the $X_e$'s. Then $\overline{X_e}$ can be identified with $[0,l(e)]$
 and this will be done tacitly in the following.
For simplicity, we will assume that all lengths $l(e)$ are uniformly bounded away from zero.

To introduce a metric structure on $X$, we say that $x\in X^N$ is a \emph{good polygon} if for every
$k\in\{1,\dots,N-1\}$ there is a unique edge $e\in E$ such that
$\{x_k,x_{k+1}\}\subseteq \overline{X_e}$. Using the usual distance in
$[0,l(e)]$ we get a distance $d$ on $\overline{X_e}$ and define
 $$ L(x) = \sum_{k=1}^Nd(x_k,x_{k+1}). $$

\medskip

Provided the graph is connected and the degree $d_v$ of every vertex $v\in V$ defined as
$$
d_v := \bigl|\set{ e\in E }{ v\in\{ i(e),j(e)\} }\bigr|
$$
is finite, a metric on $X$ is given by
$$
d(p,q) := \inf\set{ L(x) }{ x\mbox{ is a good polygon with } x_0=p\mbox{ and
}x_N=q } .
$$
If the graph is not connected (but $d_v$ is still finite for every $v\in V$), then, in this way, one can provide a metric on each connected component. Subsequently, we will assume finiteness of $d_v$ for all $v$ and equip the graph with the topology induced by these metrics on the components.

The Laplacian $L$ with Kirchhoff boundary conditions is now defined as the
operator corresponding to the form $Q$ with
$$
D(Q) := W^{1,2}(X), \:\; Q(u,v) := \sum_e(u_e'|v_e') ,
$$
where $u_e := u\circ \pi_e^{-1}$ is defined on $(0,l(e))$ with $\pi_e \from X_e \to (0,l(e))$ defined by $\pi_e ((e,s)) := s$, and
\begin{align*}
  W^{1,2}(X) &:= \biggl\{ u\in C(X) \biggm|
    \sum_{e\in E}\| u_e\|_{W^{1,2}}^2 =: \|u\|_{W^{1,2}}^2<\infty \biggr\}.
\end{align*}
Then $Q$ is a Dirichlet form and $-L$ generates a positivity preserving semigroup (see, e.g., \cite{Hae,LSS}; we refer to \cite{KKVW,LSchV} for more general boundary conditions).

\begin{prop} (Characterisation of positivity improvement)
Let $Q$ be as above and let $L$ be the associated operator. Then the semigroup $(e^{-tL})_{t\ge0}$ is positivity improving if and only if $X$ is connected.
\end{prop}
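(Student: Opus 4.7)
The plan is to prove the two implications separately: the forward implication by exhibiting an invariant decomposition, and the converse by reducing to irreducibility of the Dirichlet form $Q$ as recalled in the appendix.

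For the direction positivity improving $\Rightarrow X$ connected, I would argue by contraposition. Assume $X$ admits a decomposition $X = Y \sqcup Y'$ into two clopen sets of positive $m$-measure (which is the case whenever the graph has at least two components that contain an edge). Since every $u \in D(Q) = W^{1,2}(X)$ is continuous on $X$ by definition of $W^{1,2}(X)$, and the indicators $\ind_Y$, $\ind_{Y'}$ are continuous on $X$, the products $u\ind_Y$ and $u\ind_{Y'}$ again lie in $W^{1,2}(X)$. Since each edge lies entirely in one component, the form splits as
\[
 Q(u,u) = Q(u\ind_Y, u\ind_Y) + Q(u\ind_{Y'}, u\ind_{Y'})
 \qquad (u \in D(Q)).
\]
Consequently $L$ respects the orthogonal decomposition $L^2(X,m) = L^2(Y,m) \oplus L^2(Y',m)$, and so does $e^{-tL}$. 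Any positive $f \in L^2(X,m)$ supported in $Y$ then has $e^{-tL}f = 0$ on $Y'$, contradicting positivity improvement.

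For the converse I would invoke the equivalence between positivity improvement and irreducibility of a Dirichlet form. Since $Q$ satisfies the first Beurling--Deny criterion, the semigroup is already positivity preserving, so it suffices to rule out a reducing set, i.e., a measurable $A\subset X$ with $0 < m(A) < m(X)$ such that $\ind_A u \in D(Q)$ for every $u \in D(Q)$. Because $D(Q) \subset C(X)$, the product $\ind_A u$ must be continuous for every $u \in D(Q)$; and since $D(Q)$ contains piecewise-linear hat functions that are nonzero at any prescribed point of $X$, the indicator $\ind_A$ itself has to be continuous at every point where $m$ has support. Hence $A$ coincides modulo null sets with a clopen subset of $X$, and by connectedness this subset is $\emptyset$ or $X$, contradicting $0 < m(A) < m(X)$.

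The main obstacle I foresee is the careful passage from the measurable invariant set $A$ to a genuine clopen subset of $X$. The subtlety is that the vertex set $V$ is a countable $m$-null set which nevertheless carries topological information through the Kirchhoff continuity constraint. Since Lebesgue measure on each edge has full topological support, the argument above identifies $A$ modulo null sets with an open-and-closed subset of $X \setminus V$; the continuity of $W^{1,2}$ functions across each vertex then propagates this across $V$ to yield a clopen subset of all of $X$. Once this identification is in place, connectedness of $X$ closes the argument, completing the proof.
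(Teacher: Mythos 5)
Your proposal is correct in substance but follows a genuinely different route from the paper for the nontrivial implication. The paper treats ``improving $\Rightarrow$ connected'' as obvious (your clopen splitting of the form and hence of $L$ and $e^{-tL}$ is exactly the intended argument), and for ``connected $\Rightarrow$ improving'' it simply cites the Harnack inequality for connected metric graphs from \cite{Hae}, which in fact gives much stronger pointwise information on the kernel. You instead argue intrinsically via the irreducibility characterisation of Theorem~\ref{char-irreducible}: a nontrivial closed subspace invariant under multiplication by bounded measurable functions is of the form $L^2(A,m)$; invariance under the selfadjoint semigroup makes $e^{-tL}$ commute with multiplication by $\ind_A$, whence $\ind_A D(Q)\subseteq D(Q)$; and the continuity built into $W^{1,2}(X)$ then forces $A$ to be (up to null sets) a union of whole edges taking a common value across each vertex, so connectedness leaves only the trivial cases. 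This buys a self-contained proof not relying on the preprint \cite{Hae}, at the price of invoking the abstract theorem from the appendix and yielding only the $L^2$ statement rather than kernel positivity. Two points deserve more careful phrasing: first, ``$\ind_A u\in D(Q)$ for all $u\in D(Q)$'' is not by itself equivalent to $L^2(A,m)$ being reducing, so present it as the necessary condition extracted from the chain above (which is all your argument uses); second, $\ind_A$ is only defined almost everywhere, so the continuity step should be run with continuous representatives -- a hat function positive inside one edge shows $\ind_A$ is a.e.\ constant on that edge, and a hat function equal to $1$ at a common vertex shows adjacent edges carry the same constant. Finally, as in the paper's one-line forward implication, components that are isolated vertices are $m$-null and invisible to the $L^2$ notion of positivity improvement, so ``connected'' is to be understood modulo such degenerate components, as your parenthetical remark already indicates.
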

\begin{proof} It is clear that $e^{-t L}$ cannot be positivity improving if $\Gamma$ is not connected. The other implication (and much stronger results) follow immediately from the Harnack inequality presented in \cite{Hae} for connected graphs.
\end{proof}

Moreover, the existence of (continuous) kernels is known in this situation; see, e.g., \cite{Hae,LSS}. By the discussion at the end of Section~\ref{Semigroups} we obtain the following proposition.

\begin{prop}
Let $Q$ be as above and let $L$ be the associated operator. If $X$ is connected, then $e^{-t L}$ possesses a kernel\/ $p$ satisfying the conditions \textup{(K1)}, \textup{(K2)}, \textup{(K3)} of Section~\ref{Semigroups}.
\end{prop}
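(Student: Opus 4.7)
The plan is to invoke Proposition~\ref{existence-kernel} and verify its hypotheses for the Kirchhoff Laplacian on a connected metric graph. The topological preliminaries are standard in this setting: equipped with the length metric, $X$ is a locally compact separable metric space (local compactness follows from the finiteness of vertex degrees together with the uniform lower bound on edge lengths, which makes small balls contained in a finite union of closed edge pieces; separability follows from countability of $E$), and $m$, taken to be Lebesgue measure on each edge with vertices carrying no mass, is a Radon measure of full support. Selfadjointness and boundedness below of $L$ are built into its definition as the operator associated with the closed form $Q$, which is nonnegative since $Q(u,u) = \sum_e \|u_e'\|_{L^2}^2$. Positivity improvement of $(e^{-tL})_{t\ge 0}$ is exactly the preceding proposition under the connectivity assumption.

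The key hypothesis to check is that $e^{-tL}$ maps $L^2(X,m)$ into $C(X)$. I would exploit the particular feature of the paper's definition of $W^{1,2}(X)$: continuity is part of the definition, so that $D(Q) = W^{1,2}(X) \subset C(X)$ by construction. Standard semigroup theory for a nonnegative selfadjoint operator gives $e^{-tL}(L^2(X,m)) \subset D(L) \subset D(Q) \subset C(X)$ for every $t>0$, supplying the required continuous representative. With all hypotheses in hand, Proposition~\ref{existence-kernel} produces a measurable kernel $p$ satisfying (K1), (K2), (K3), together with a closed null set $M \subset X$ outside which $p_t(x,y)>0$.

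The only remaining concern is to obtain strict positivity \emph{everywhere}, as demanded in Section~\ref{Semigroups}, since Proposition~\ref{existence-kernel} allows for an exceptional set $M$. I would resolve this in either of two ways: by appealing directly to Remark~(c) after Proposition~\ref{existence-kernel}, which permits replacing $X$ by $X \setminus M$ without altering $L^2(X,m)$ and leaves (K1)--(K3) intact; or, more intrinsically, by a Harnack-type argument on the metric graph (for instance via the results of \cite{Hae}) which shows that $M$ is in fact empty in this setting. The main obstacle, such as it is, lies in this final bookkeeping around $M$, but both routes are essentially routine given the one-dimensional nature of the underlying spaces, where the heat kernel admits local elliptic regularity edge by edge.
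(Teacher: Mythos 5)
Your proposal is correct and follows essentially the same route as the paper, which likewise obtains the kernel from the discussion at the end of Section~\ref{Semigroups} (Proposition~\ref{existence-kernel} together with Remark~(c)) combined with the known continuity of the semigroup, there cited from \cite{Hae,LSS}. Your self-contained verification that $e^{-tL}(L^2)\subseteq D(L)\subseteq D(Q)=W^{1,2}(X)\subseteq C(X)$, and your handling of the exceptional set $M$ via Remark~(c) or a Harnack argument, is exactly the intended mechanism, just spelled out in more detail than the paper does.
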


Given the previous propositions, as a consequence from Theorem~\ref{kernelversion} we obtain the following result for quantum graphs with Kirchhoff boundary conditions.

\begin{coro}
Let $\Gamma$ be a connected metric graph and let $L$ be the associated operator with Kirchhoff boundary conditions. Then the kernel\/ $p$ of $e^{-t L}$ satisfies
 $$ p_t (x,y) \to \Phi (x) \Phi (y) \:\:\mbox{and}\;\: \frac{\log p_t (x,y)}{t} \to -E_0 \qquad (t\to \infty)$$
for a unique $\Phi \from X \to [0,\infty)$.
\end{coro}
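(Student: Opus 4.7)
The plan is to deduce the corollary as a direct application of Theorem~\ref{kernelversion}, using the two propositions immediately preceding it to verify its hypotheses. First I would observe that the Kirchhoff form $Q$ is visibly non-negative, being a sum of squared $L^2$-norms of edgewise derivatives, so the associated selfadjoint operator $L$ satisfies $L \ge 0$ in the form sense; in particular its infimum of the spectrum $E_0$ is finite (indeed, $E_0 \ge 0$), which is precisely the standing hypothesis of Theorem~\ref{kernelversion}.

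Next I would invoke the first proposition in this subsection (characterisation of positivity improvement): since $\Gamma$ is assumed connected, $(e^{-tL})_{t\ge0}$ is positivity improving. Then the second proposition in this subsection supplies a kernel $p$ of $e^{-tL}$ satisfying conditions \textup{(K1)}, \textup{(K2)}, \textup{(K3)} of Section~\ref{Semigroups}. This places us exactly in the framework required by Theorem~\ref{kernelversion}.

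At this point Theorem~\ref{kernelversion} applies and delivers both conclusions at once. Part (a) of that theorem provides a unique measurable $\Phi \from X \to [0,\infty)$ with $e^{tE_0}p_t(x,y) \to \Phi(x)\Phi(y)$ as $t\to\infty$ for all $x,y\in X$, which gives the first claimed convergence in the corollary (the exponential normalising factor $e^{tE_0}$ is understood as in the statement of Theorem~\ref{kernelversion}). Part (b) of the theorem gives directly $\log p_t(x,y)/t \to -E_0$, which is the second claimed convergence.

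Since the two propositions already discharge the two substantive hypotheses (positivity improvement and existence of a kernel obeying \textup{(K1)}--\textup{(K3)}), there is essentially no obstacle left: the corollary is a bookkeeping consequence. The only point worth double-checking in the write-up is that the underlying measure space $(X,m)$ with the natural Lebesgue measure along edges (and vertices of $m$-measure zero) is $\sigma$-finite and that $L^2(X,m)$ is the Hilbert space in which $L$ is selfadjoint — both of which are standard in the quantum-graph setting recalled above.
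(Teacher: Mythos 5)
Your proposal is correct and follows exactly the paper's route: the corollary is stated there as an immediate consequence of the two preceding propositions (connectedness giving positivity improvement, and existence of a kernel satisfying (K1)--(K3)) combined with Theorem~\ref{kernelversion}, with no further argument given. Your additional remarks (finiteness of $E_0$ from non-negativity of the Kirchhoff form, and reading the first convergence with the normalising factor $e^{tE_0}$ as in Theorem~\ref{kernelversion}) are sensible bookkeeping and consistent with the paper.
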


\bigskip

\subsection{Laplacians on graphs.}

The study of Laplacians on graphs has a long history (see, e.g., the monographs \cite{Chu,Col} and the references therein). In recent years issues such as essential self-adjointness \cite{Jor,Woj1,Web}, stochastic (in)completeness and suitable isoperimetric inequalities for infinite graphs have attracted particular attention, see e.g.\ \cite{CGY,Dod0, Dod,DK, Fuj,Jor,Kel, KL1,KL2,KP, Woj1,Woj2, Web} and the references therein. These issues can be studied in various settings. The most general setting seems to be the one introduced in \cite{KL1} (see \cite{HK,HKLW} as well), which we now recall:

Let $V$ be a countable set. Let $m$ be a measure on $V$ with full support, i.e., $m$ is a map on $V$ taking values in $(0,\infty)$.
A \emph{symmetric weighted graph over $V$} or a \emph{symmetric Markov chain on $V$} is a pair $(b,c)$ consisting of a map $c \from V \to [0,\infty)$ and
a map $b \from V\times V \to [0,\infty)$ with $b(x,x)=0$ for all $x\in V$
satisfying the following two properties:
\begin{itemize}
\item[(b1)] $b(x,y)= b(y,x)$ for all $x,y\in V$.
\item[(b2)] $\sum_{y\in V} b(x,y) <\infty$ for all $x\in V$.
\end{itemize}
Then $x,y\in V$ with $b(x,y)>0$ are called neighbors and thought to be
connected by an edge with weight $b(x,y)$. More generally, $x,y\in V$ are called connected if
there exist $x_0,x_1,\ldots,x_n \in V$ with $b(x_i, x_{i+1}) > 0$ for
$i=0,\ldots,n$ and $x_0 = x$, $x_n = y$. If any two $x,y\in V$ are connected, then $(V,b,c)$ is called connected. To $(V,b,c)$ we associate the form $Q^{\rm comp}=Q^{\rm comp}_{b,c}$ defined on the set $C_\c (V)$ of functions on $V$ with finite support by
$$
Q^{\rm comp} \from C_\c (V)\times C_\c (V) \longrightarrow [0,\infty)$$
$$Q^{\rm comp}(u,v) = \frac{1}{2}
\sum_{x,y\in V} b(x,y) (u(x) - u(y))\overline{(v(x) - v(y))} + \sum_x c(x) u(x)\overline{v(x)}.$$
Observe that the first sum is convergent by properties (b1) and~(b2); the second sum is finite.
The form $Q^{\rm comp}$ is closable in $\ell^2(V,m)$;
the closure will be denoted by $Q=Q_{b,c,m}$ and its domain by $D(Q)$.
Thus, there exists a unique selfadjoint operator $L = L_{b,c,m}$ in
$\ell^2 (V,m)$ such that
 $$ D(Q) = \mbox{Domain of definition of $L^{1/2}$}$$
and
 $$ Q(u) = \langle L^{1/2} u , L^{1/2} u\rangle_m$$
for all $u\in D(Q)$. Note that $L$ consists of essentially two parts, viz, a Laplacian type operator encoded by $b$ and a non-negative potential encoded by~$c$.

The form $Q$ is a regular Dirichlet form on $(V,m)$ (and any regular Dirichlet form on $(V,m)$ arises in this way \cite{Fuk,KL1}). Thus, the operator $-L$ generates a positivity preserving semigroup.

It is not hard to characterise when the semigroup is positivity improving.

\begin{prop} (Characterisation of positivity improvement)
Let $(V,b,c)$ be as above and let $L$ be the associated operator. Then the semigroup $(e^{-tL})_{t\ge0}$ is positivity improving if and only if $(V,b,c)$ is connected.
\end{prop}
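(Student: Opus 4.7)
I would split the argument into the two implications and use the reducibility/invariance criterion for positivity preserving selfadjoint semigroups discussed in the appendix on irreducibility. The easy direction is that ``not connected'' precludes positivity improvement: if $V = V_1 \sqcup V_2$ with both parts non-empty and $b(x,y) = 0$ whenever $x \in V_1$, $y\in V_2$, then a direct computation on $Q^{\rm comp}$ shows that the $b$-sum contains no cross terms between $C_\c(V_1)$ and $C_\c(V_2)$ while the $c$-sum is already diagonal. Hence $Q^{\rm comp}$ is an orthogonal direct sum of its restrictions to $C_\c(V_1)$ and $C_\c(V_2)$, and this splitting passes to the closure $Q$, to $L$, and to the semigroup. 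Then $e^{-tL}f$ vanishes on $V_2$ for any $f$ supported on $V_1$, which rules out positivity improvement.

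For the converse, suppose $(V,b,c)$ is connected. I would invoke the general principle from the appendix: a positivity preserving selfadjoint semigroup is positivity improving if and only if no measurable $A \subseteq V$ with $0 < m(A) < m(V)$ yields a subspace $\ell^2(A,m)$ invariant under all $e^{-tL}$. Assume for contradiction that such an $A$ exists, and let $x \in A$, $y \in A^c$. The invariance of $\ell^2(A,m)$ is equivalent to the form-orthogonality $Q(u,v) = 0$ for all $u \in \ell^2(A,m) \cap D(Q)$ and $v \in \ell^2(A^c,m) \cap D(Q)$. Since $\delta_x, \delta_y \in C_\c(V) \subseteq D(Q)$, I would test this with $u = \delta_x$ and $v = \delta_y$ and compute $Q^{\rm comp}(\delta_x, \delta_y) = -b(x,y)$; indeed, the $c$-term vanishes as $x \neq y$, and the symmetric $b$-sum receives contributions only from the ordered pairs $(x,y)$ and $(y,x)$, each giving $-\tfrac12 b(x,y)$. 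Hence $b(x,y) = 0$ for every $x \in A$, $y \in A^c$, contradicting the connectedness of $(V,b,c)$.

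The main obstacle is the step promoting invariance of $\ell^2(A,m)$ under $(e^{-tL})$ to the form-level orthogonality $Q(u,v) = 0$ across the decomposition, rather than merely $\langle Lu, v\rangle = 0$ on $D(L)$. This relies on the fact that, under an invariant decomposition, $L$ splits as a direct sum of selfadjoint operators on the two factors and, via functional calculus, $D(Q) = D(L^{1/2})$ splits correspondingly; this is a standard but non-trivial fact that I would extract from the appendix's irreducibility discussion. Once this reduction is in place, the remainder is the elementary $\delta$-function computation above.
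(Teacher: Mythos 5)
Your argument is correct, but it is not the route the paper takes: the paper disposes of the proposition in two lines, observing that disconnectedness obviously precludes positivity improvement and simply citing \cite{KL1} (with \cite{Web,Woj1,Dav3} for special cases) for the converse, whereas you give a self-contained proof by combining Theorem~\ref{char-irreducible} from the appendix with a direct form computation. Your reduction is sound: in the discrete setting a closed subspace invariant under multiplication by bounded functions is exactly $\ell^2(A,m)$ for some $A\subseteq V$ (if $f$ lies in the subspace, so does $\ind_{\{x\}}f=f(x)\delta_x$), invariance of $\ell^2(A,m)$ under the selfadjoint semigroup makes the projection onto it commute with $e^{-tL}$, and the computation $Q^{\rm comp}(\delta_x,\delta_y)=-b(x,y)$ for $x\ne y$ then forces $b\equiv0$ across the cut, contradicting connectedness. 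Two small remarks: the step you flag as the main obstacle can be done more cheaply than via a full splitting of $D(Q)$, since for $u,v\in D(Q)$ one has $Q(u,v)=\lim_{t\downarrow0}t^{-1}\bigl\langle (I-e^{-tL})u,v\bigr\rangle$, and invariance of $\ell^2(A,m)$ makes each term vanish for $u=\delta_x$, $v=\delta_y$; and the condition ``$0<m(A)<m(V)$'' should be phrased as ``$A$ and $A^c$ both non-empty'', since $m(V)=\infty$ is allowed. What your approach buys is an elementary, citation-free proof of the hard implication resting only on the appendix's irreducibility characterisation; what the paper's approach buys is brevity and a pointer to \cite{KL1}, where the statement is established in the same general weighted-graph framework.
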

\begin{proof}
It is clear that the semigroup cannot be positivity improving if the graph is not connected. The other implication has been shown in \cite{KL1} (see as well \cite{Web, Woj1, Dav3} for earlier results in special cases).
\end{proof}

As the underlying space is discrete, existence of kernels satisfying (K1), (K2) and (K3) is rather obvious (compare the discussion at the end of Section~\ref{Semigroups} as well). Thus, the results of the previous sections apply. We note in particular the following consequence of Theorem~\ref{kernelversion}.

\begin{coro}
Let $(V,m)$ be a discrete measure space and $(b,c)$ a graph on $V$. Let $L$ be the associated operator. Then the kernel\/ $p$ of $e^{-t L}$ satisfies
 $$ e^{t E_0} p_t (x,y) \to \Phi (x) \Phi (y) \;\: \mbox{and}\;\: \frac{\log p_t (x,y)}{t} \to -E_0 \qquad (t\to \infty)$$
for a unique $\Phi \from V \to [0,\infty)$.
\end{coro}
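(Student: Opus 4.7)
The plan is to reduce this corollary to Theorem~\ref{kernelversion}. Throughout I would tacitly assume $(V,b,c)$ is connected; otherwise the preceding proposition shows that $(e^{-tL})_{t\ge 0}$ fails to be positivity improving and the assertion is actually false (cf.\ Remark~(c) after Corollary~2.4 in this paper). Two hypotheses of Theorem~\ref{kernelversion} need to be checked: $E_0 > -\infty$, and the existence of a pointwise positive kernel $p$ satisfying (K1), (K2), (K3). The first is immediate from the manifestly non-negative expression defining $Q^{\rm comp}$, which gives $L \ge 0$ and in particular $E_0 \ge 0$.

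For the kernel, I would exploit that $V$ is at most countable and $m$ has full support: every Dirac function $\delta_y$ lies in $\ell^2(V,m)$, and, conversely, pointwise evaluation is a bounded functional on $\ell^2(V,m)$ (since $|f(x)| \le \|f\|_2/\sqrt{m(\{x\})}$). I would then define
$$ p_t(x,y) := \frac{(e^{-tL}\delta_y)(x)}{m(\{y\})}. $$
Symmetry (K1) translates directly the selfadjointness identity $\scpr{\delta_x}{e^{-tL}\delta_y}_m = \scpr{e^{-tL}\delta_x}{\delta_y}_m$. The semigroup identity (K3) comes from $e^{-(t+s)L}\delta_y = e^{-tL}(e^{-sL}\delta_y)$ after decomposing $e^{-sL}\delta_y$ against the orthogonal basis $\{\delta_z/\sqrt{m(\{z\})}\}_{z\in V}$ of $\ell^2(V,m)$ and using pointwise continuity to exchange evaluation at $x$ with the sum. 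Specializing (K3) at $s = t$ and combining with (K1) yields
$$ \sum_{z\in V} p_t(x,z)^2\,m(\{z\}) = p_{2t}(x,x), $$
which is finite because $e^{-2tL}\delta_x \in \ell^2(V,m)$ takes a finite value at every point; this is (K2). Strict positivity of $p$ is precisely the preceding proposition applied to the positive function $\delta_y$.

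With all hypotheses of Theorem~\ref{kernelversion} verified, parts (a) and (b) of that theorem deliver the two claimed convergences directly, and uniqueness of $\Phi$ is forced by $\Phi(x) = \lim_{t\to\infty}\bigl(e^{tE_0}p_t(x,x)\bigr)^{1/2}$ on setting $x = y$. There is no substantive analytic obstacle in this reduction; the only slightly delicate point is (K3) on individual values $p_t(x,y)$, which requires interpreting the $\ell^2$ semigroup law against Dirac masses, and is made safe by the continuity of pointwise evaluation noted above.
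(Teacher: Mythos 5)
Your proposal is correct and follows essentially the same route as the paper: the paper also reduces the corollary to Theorem~\ref{kernelversion}, dismissing the existence of a kernel with (K1)--(K3) as obvious in the discrete setting (your explicit construction $p_t(x,y)=(e^{-tL}\delta_y)(x)/m(\{y\})$ just fills in those routine details, and the same basis-expansion argument also yields the representation $e^{-tL}f(x)=\int p_t(x,y)f(y)\,dm(y)$, which is worth stating), and using the preceding proposition for strict positivity. Your remark that connectedness must be assumed is apt, since the paper tacitly assumes it here (as it does explicitly in the metric graph corollary).
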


\textbf{Remark.}  The result of the corollary positively answers a question of Weber in \cite{Web2}.

\appendix
\section{Irreducibility and positivity preserving semigroups}\label{Irreducibility}

The crucial assumption in our results is that $e^{-t L}$ is positivity improving. It turns out that this condition is essentially equivalent to irreducibility combined with the preservation of positivity. This allows one to set up a stability theory for positivity improving semigroups. For semigroups with $E_0$ being an eigenvalue this is discussed in XIII.12 of \cite{RS}. The general case is treated, e.g., in \cite{LSV, Nagel}. For completeness reasons we shortly collect here a few items from \cite{LSV, Nagel} to which we refer for further details and results.

\bigskip

We start with the definition of irreducibility for positivity preserving semigroups. Let us emphasize that our definition is the usual one in this context. It differs from the standard definition of irreducibility in the context of selfadjoint operators by an additional assumption on invariance under multiplication by $L^\infty$ functions.

\begin{definition}
Let $L$ be a selfadjoint operator in $L^2 (X,m)$ and assume that $-L$ generates a positivity preserving semigroup $(e^{-t L})_{t\ge0}$. Then $L$ is called \emph{irreducible} if any closed subspace of $L^2 (X,m)$ which is
\begin{itemize}
\item invariant under multiplication by bounded measurable functions and
\item invariant under the semigroup,
\end{itemize}
agrees with $\{0\}$ or $L^2 (X,m)$.
\end{definition}

The following is well known. It can be found in Section~C-III.3 of \cite{Nagel} (see \cite{LSV,RS} as well).

\begin{theorem}\label{char-irreducible}
Let $L$ be a selfadjoint operator in $L^2 (X,m)$ which is bounded below. Assume that the semigroup $(e^{-t L})_{t\ge0}$ is positivity preserving. Then the following assertions are equivalent:
\begin{itemize}
\item[(i)] $e^{-t L}$ is positivity improving for one (all) $t>0$.
\item[(ii)] $(L+\alpha)^{-1} $ is positivity improving for one (all) $\alpha>-E_0(L)$.
\item[(iii)] $L$ is irreducible.
\end{itemize}
\end{theorem}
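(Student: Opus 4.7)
I will prove the cycle (i) $\Rightarrow$ (ii) $\Rightarrow$ (iii) $\Rightarrow$ (i); the first two implications are essentially formal, and the third is the substantive step, genuinely using self-adjointness.

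For (i) $\Rightarrow$ (ii), I use the Laplace representation
\[
(L+\alpha)^{-1} = \int_0^\infty e^{-\alpha t} e^{-tL}\,dt,
\]
valid as a strong integral for $\alpha > -E_0(L)$. For any positive $f \in L^2(X,m)$, the integrand is strictly positive $m$-a.e.\ at every $t > 0$ by (i), so the integral is strictly positive $m$-a.e.\ as well. For (ii) $\Rightarrow$ (iii), let $M \ne \{0\}$ be a closed subspace of $L^2(X,m)$ invariant under both $L^\infty$-multiplication and the semigroup. $L^\infty$-invariance makes $M$ a band, so $M = L^2(A,m)$ for some measurable $A$ with $m(A) > 0$, and $M$ contains a positive element $f$ (the positive part of any nonzero element remains in $M$). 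The Laplace formula transfers semigroup invariance to resolvent invariance, so $(L+\alpha)^{-1} f \in M = L^2(A,m)$, i.e.\ it vanishes off $A$; by (ii) this function is strictly positive $m$-a.e., forcing $m(A^c) = 0$ and $M = L^2(X,m)$.

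For the substantive direction (iii) $\Rightarrow$ (i), I argue contrapositively, with a preliminary lemma at its heart: for any positive $f \in L^2(X,m)$, the set $N_f := \bigcap_{s > 0}\{x : e^{-sL} f(x) = 0\}$ is $m$-null. Indeed, $L^2(N_f, m)$ is semigroup-invariant---for positive $h$ supported in $N_f$, the definition yields $\langle h, e^{-sL} f\rangle = 0$ for all $s > 0$, and self-adjointness combined with the semigroup identity gives $\langle e^{-rL} h, e^{-sL} f\rangle = \langle h, e^{-(r+s)L} f\rangle = 0$ for all $r, s > 0$, so $\mathrm{supp}(e^{-rL} h) \subseteq N_f$. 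Irreducibility together with $\mathrm{supp}(f) \not\subseteq N_f$ then forces $m(N_f) = 0$. From this lemma, (iii) $\Rightarrow$ (ii) follows cleanly: if $(L+\alpha)^{-1} f$ vanishes on a set $B$ of positive measure, a Fubini argument in the Laplace integral shows $e^{-tL} f = 0$ $m$-a.e.\ on $B$ for a.e.\ $t > 0$, and then by $L^2$-continuity of $t \mapsto \chi_B \cdot e^{-tL} f$ for every $t > 0$, so $B \subseteq N_f$, contradicting $m(B) > 0$.

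The main obstacle is the final implication, going from positivity improving of each resolvent $(L+\alpha)^{-1}$ to positivity improving of each $e^{-tL}$. I would use two further ingredients: injectivity of $e^{-tL}$, from the spectral theorem (since $e^{-ts}$ never vanishes on $[E_0,\infty)$), and the fact that $e^{-tL}$ maps strictly positive functions to strictly positive functions---obtained by assuming $e^{-tL} g = 0$ on $B$ with $g > 0$ $m$-a.e., picking a positive $h$ supported in $B$, and deriving $\langle e^{-tL} h, g\rangle = 0$, hence $e^{-tL} h = 0$ a.e., hence $h = 0$ by injectivity. Propagating this strict-positivity preservation back to positivity improvement of $e^{-tL}$ on merely positive (not strictly positive) $f$ is most cleanly organised via the commutation of $e^{-tL}$ with $(L+\alpha)^{-1}$ together with an application of the lemma to $e^{-tL} f$, or, alternatively, via the Dirichlet-form framework as in \cite{LSV, Nagel}.
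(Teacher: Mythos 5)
Your groundwork is sound: the Laplace-transform implications (i)$\Rightarrow$(ii)$\Rightarrow$(iii), the lemma that $N_f$ is $m$-null, the deduction (iii)$\Rightarrow$(ii), and the observation that $e^{-tL}$ maps strictly positive functions to strictly positive functions (via self-adjointness and injectivity of $e^{-tL}$) are all correct, modulo routine repairs (define $N_f$ through rational $s$ and strong continuity so that it is a genuine measurable set, and verify strict positivity of the Bochner integrals by pairing against indicator functions rather than arguing pointwise, since the exceptional null sets depend on $t$). Note also that the paper itself offers no proof of this theorem --- it only cites Section~C-III.3 of Nagel (and \cite{LSV,RS}) --- so a self-contained argument would be a genuine addition. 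However, your proposal has a real gap exactly at the decisive step, namely passing from (ii)/(iii) to positivity improvement of $e^{-tL}$ for \emph{every fixed} $t>0$ on merely positive $f$. The two devices you gesture at do not close it: applying your lemma to $u=e^{-tL}f$ only yields information about $e^{-(t+s)L}f$ for suitable $s>0$, i.e.\ about later times; and the commutation $(L+\alpha)^{-1}e^{-tL}f=e^{-tL}(L+\alpha)^{-1}f$ together with strict-positivity preservation only shows that $(L+\alpha)^{-1}\bigl(e^{-tL}f\bigr)$ is strictly positive, which you already know from resolvent improvement and which of course does not force $e^{-tL}f$ itself to be strictly positive. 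The fallback ``via the Dirichlet-form framework as in \cite{LSV,Nagel}'' is a citation, not a proof, and it is precisely this implication that is the nontrivial content of the theorem (it genuinely uses self-adjointness/analyticity; Euler-type approximations of $e^{-tL}$ by powers of improving resolvents only survive the strong limit as \emph{preservation} of positivity).

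The gap can be closed inside your framework with two further observations, both in the spirit of your strict-positivity step. First, supports grow: for positive $h$ and $r>0$ one has $e^{-rL}h>0$ a.e.\ on $\{h>0\}$; indeed, if $S\subseteq\{h>\eps\}$ has positive finite measure and $\langle \ind_S, e^{-rL}h\rangle=0$, then $e^{-rL/2}\ind_S$ and $e^{-rL/2}h$ have disjoint supports, while $\ind_S\le\eps^{-1}h$ gives $e^{-rL/2}\ind_S\le\eps^{-1}e^{-rL/2}h$, so $e^{-rL/2}\ind_S=0$ and hence $\ind_S=0$ by injectivity. Consequently, for positive $f,g$ the set $\set{t>0}{\langle g,e^{-tL}f\rangle>0}$ is upward closed, since $\langle g,e^{-tL}f\rangle>0$ exactly when the supports of $e^{-tL/2}f$ and $e^{-tL/2}g$ overlap in a set of positive measure. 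Second, $t\mapsto\langle g,e^{-tL}f\rangle$ is real-analytic on $(0,\infty)$ by the spectral theorem (here boundedness below of $L$ enters); hence if it vanished on an initial interval it would vanish identically, contradicting your lemma, which produces some $s$ with $\langle g,e^{-sL}f\rangle>0$. Combining the two facts gives $\langle g,e^{-tL}f\rangle>0$ for all $t>0$ and all positive $f,g$, i.e.\ (i) for all $t$; as a final small point, the ``one (all)'' alternatives in (i) and (ii) deserve a sentence (improvement at one $t_0$ propagates to all $t\ge t_0$ by the semigroup property and injectivity, and your Laplace argument only needs the integrand for $t\ge t_0$).
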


Given this theorem we can now discuss the stability of the improvement of positivity:
By the first Beurling-Deny criterion, the semigroup $(e^{-tL})_{t\ge0}$ is positivity preserving if and only if the associated form $Q$
satisfies
 $$Q (|u|, |u|) \le Q(u,u)$$
for all $u\in D(Q)$.
Now, obviously this condition is preserved when $Q$ is replaced by $Q + V$ with $V \from X \to \R$ measurable and bounded below such that the domain of definition
 $$D (Q + V) :=\set{ f\in D(Q) }{ \int_X V |f|^2 \, dm <\infty }$$
is still dense. In particular, the operator $L \dotplus V$ associated to $Q + V$ (the \emph{form sum} of $L$ and $V$) is minus the generator of a positivity preserving semigroup.

Now, we can present the following variant of Theorem in XIII.12 of \cite{RS} and its proof.


\begin{coro}\label{Stability}
Let $L$ be a selfadjoint operator in $L^2 (X,m)$ with positivity improving semigroup $(e^{-t L})_{t\ge0}$. Let $V \from X \to \R$ be measurable and bounded below satisfying the following:

\begin{itemize}

\item $D(Q + V)$ is dense in $L^2 (X,m)$.

\item There exists a sequence of bounded $V_n$ on $X$ such that $L\dotplus V - V_n$ converges to $L$ in the strong resolvent sense.

\end{itemize}
Then $L\dotplus V$ is the generator of a positivity improving semigroup.
\end{coro}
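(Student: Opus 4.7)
The plan is to reduce positivity improvement of $e^{-t(L\dotplus V)}$ to irreducibility of $L\dotplus V$ and then to transfer irreducibility from $L$ across the approximation $V_n$. By the Beurling-Deny argument recalled in the paragraph preceding the corollary, the semigroup generated by $-(L\dotplus V)$ is already positivity preserving, and since both $L$ and $V$ are bounded below, so is $L\dotplus V$. Hence Theorem~\ref{char-irreducible} applies, and it suffices to show that $L\dotplus V$ is irreducible.

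To that end, I take a closed subspace $M\subseteq L^2(X,m)$ that is invariant both under multiplication by bounded measurable functions and under $(e^{-t(L\dotplus V)})_{t\ge0}$, aiming to show $M=\{0\}$ or $M=L^2(X,m)$. The crucial observation is that for each $n$, the selfadjoint Trotter product formula yields
\[
e^{-t(L\dotplus V-V_n)}=\lim_{k\to\infty}\bigl(e^{-\tfrac{t}{k}(L\dotplus V)}\,e^{\tfrac{t}{k}V_n}\bigr)^{k}
\]
in the strong operator topology (valid since $L\dotplus V$ is selfadjoint and $V_n$ is bounded). Each factor preserves $M$: the free semigroup by hypothesis, and multiplication by the bounded measurable function $e^{tV_n/k}$ by invariance of $M$ under bounded multiplications. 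Since $M$ is closed, it is invariant under $e^{-t(L\dotplus V-V_n)}$ for every $n$ and every $t\ge0$.

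Invoking the hypothesis that $L\dotplus V-V_n\to L$ in the strong resolvent sense, which for selfadjoint operators uniformly bounded below implies strong convergence of the associated semigroups $e^{-t(L\dotplus V-V_n)}\to e^{-tL}$, the closedness of $M$ yields that $M$ is invariant under $e^{-tL}$ as well. Since $e^{-tL}$ is positivity improving, $L$ is irreducible by Theorem~\ref{char-irreducible}, so $M\in\{\{0\},L^2(X,m)\}$. This establishes irreducibility of $L\dotplus V$, and a final appeal to Theorem~\ref{char-irreducible} in the converse direction delivers positivity improvement of $(e^{-t(L\dotplus V)})_{t\ge0}$.

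The main obstacle I anticipate is the middle transfer step: one must ensure both that the Trotter product formula converges strongly in the present setting and that strong resolvent convergence indeed upgrades to strong convergence of the semigroups. The first point is a textbook consequence of selfadjointness combined with boundedness of $V_n$; the second requires a uniform lower bound on the approximating operators, which is easily extracted from the lower bounds on $L$ and the (bounded below) $V-V_n$. Beyond those two technical verifications, the argument is just mechanical bookkeeping of invariances and a double application of the characterisation in Theorem~\ref{char-irreducible}.
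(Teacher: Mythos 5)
Your argument is correct and follows essentially the same route as the paper's own proof: positivity preservation from the Beurling--Deny discussion, reduction to irreducibility via Theorem~\ref{char-irreducible}, invariance transferred to $e^{-t(L\dotplus V-V_n)}$ by the Trotter product formula, and then to $e^{-tL}$ by the convergence hypothesis. The only cosmetic difference is that you pass from strong resolvent convergence to strong semigroup convergence (which needs a uniform lower bound that is not literally among the hypotheses); this is avoided by noting that invariance under the semigroups is equivalent to invariance under the resolvents, so the strong resolvent convergence can be used directly.
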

\begin{proof}
Set $L_1 := L \dotplus V$.  By denseness of $D(Q + V)$ in $L^2 (X,m)$ and the preceding discussion, the semigroup $e^{-t L_1}$ is positivity preserving. By Theorem~\ref{char-irreducible} it now suffices to show irreducibility.  Let $U$ be a closed subspace of $L^2 (X,m)$ invariant under multiplication by bounded functions and $e^{-t L_1}$ for each $t>0$. Then, by the Trotter product formula, the subspace $U$ will be invariant under $e^{-t(L_1 - V_n)}$ as well, for each $n\in \N$. By strong resolvent convergence, the subspace $U$ will be invariant under $e^{-t L}$ as well. As the latter semigroup is positivity improving, the subspace $U$ must be trivial.
\end{proof}

Note that the corollary obviously applies to bounded $V$ (see C-III.3.3 of \cite{Nagel} for this case as well).
Moreover, if $D(Q+V)$ is a core for $D(Q)$, then one can argue as in Proposition~5.8(b) of \cite{voi86} to see that the assumptions of the corollary are satisfied with $V_n = V\!\wedge n$.

\bigskip

\medskip

\textbf{Acknowledgements.}
It is our great pleasure to acknowledge fruitful and stimulating discussions with J{\'o}zef Dodziuk on the topics discussed in the paper. R. W. would like to thank Isaac Chavel and Leon Karp for introducing him to the subject and for their continued encouragment and support. His research is financially supported by FCT grant SFRH/BPD/45419/2008/ and FCT project PTDC/MAT/101007/2008. D.L. and M.K. gratefully acknowledge partial financial support from the German Research Foundation (DFG).


\begin{thebibliography}{10}



\bibitem{BCFK} G. Berkolaiko, R. Carlson, S. A. Fulling and P. Kuchment (eds.),
{\em Quantum graphs and their applications,} Contemporary Mathematics, vol. 415, American Mathematical Society, Providence, 2006.



\bibitem{BH}
N.~Bouleau and F.~Hirsch, \emph{Dirichlet forms and analysis on {W}iener space}, de Gruyter Studies in Mathematics, vol. 14, Walter de Gruyter \& Co., Berlin, 1991.

\bibitem{CM}
X. Cabr\'e, Y. Martel,
\emph{Existence versus explosion instantan\'ee pour des \'equations de la chaleur lin\'eaires avec potentiel singulier}, C. R. Acad. Sci. Paris S\'er. I Math. \textbf{329} (1999), no. 11, 973--978.

\bibitem{CK} I. Chavel and L. Karp, \emph{Large time behavior of the heat kernel: the parabolic $\lambda$-potential alternative}, Comment. Math. Helv. \textbf{66} (1991), no. 4, 541--556.

\bibitem{Chu} F. R. K. Chung, \emph{Spectral graph theory}, CBMS Regional Conference Series in Mathematics, vol. 92, American Mathematical Society, Providence, RI, 1997.

\bibitem{CGY} F. R. K. Chung, A. Grigor'yan, S.-T. Yau, \emph{Higher eigenvalues and isoperimetric inequalities on Riemannian manifolds and graphs}, Comm. Anal. Geom. \textbf{8} (2000), no. 5, 969--1026.

\bibitem{Col} Y. Colin de Verdi\`{e}re, \emph{Spectres de graphes}, Cours Sp\'ecialis\'es, vol. 4., Soci\'et\'e Math\'ematique de France, Paris, 1998.

\bibitem{Dav} E.B. Davies, \emph{The equivalence of certain heat kernel and Green function bounds}, J. Funct. Anal. \textbf{71} (1987), no. 1, 88--103.



\bibitem{Dav3} E. B. Davies, \emph{Linear operators and their spectra},
Cambridge Studies in Advanced Mathematics, vol. 106, Cambridge University Press, Cambridge, 2007.

\bibitem{Dod0} J. Dodziuk, \emph{ Difference equations, isoperimetric inequality and transience of certain random walks}, Trans. Amer. Math. Soc. \textbf{284} (1984), no. 2, 787--794.

\bibitem{Dod} J. Dodziuk, \emph{Elliptic operators on infinite graphs}, Analysis, geometry and topology of elliptic operators, World Sci. Publ., Hackensack, NJ, 2006, pp. 353--368.

\bibitem{DK} J. Dodziuk, W. S. Kendall, \emph{Combinatorial Laplacians and isoperimetric inequality}, From local times to global geometry, control and physics, Pitman Res. Notes Math., vol. 150, Longman Sci. Tech., Harlow, 1986, pp. 68--74.





\bibitem{EKKSTG} P. Exner; J. P. Keating; P. Kuchment, T. Sunada, A. Teplyaev, V. A.
Vladimir A. Geyler (eds.), \emph{Analysis on graphs and its applications},
Proceedings of Symposia in Pure Mathematics, vol. 77, Amer. Math. Soc., Providence, RI, 2008.





\bibitem{Fuk} M. Fukushima, Y. Oshima, M. Takeda, \emph{Dirichlet forms and symmetric Markov processes}. de Gruyter Studies in Mathematics, vol. 19. Walter de Gruyter \& Co., Berlin, 1994.


\bibitem{Fuj} K. Fujiwara, \emph{The Laplacian on rapidly branching trees}, Duke Math Jour. \textbf{83} (1996), no. 1, 191--202.

\bibitem{GGR} G. R. Goldstein, J. A. Goldstein, A. Rhandi,
\emph{Weighted Hardy's inequality and the Kolmogorov equation perturbed by an inverse-square potential},
to appear in Appl. Anal., DOI: 10.1080/00036811.2011.587809.


\bibitem{Hae} S. Haeseler, \emph{Heat kernel estimates and related inequalities on metric graphs}, preprint.

\bibitem{HK} S. Haeseler, M. Keller, \emph{Generalized solutions and spectrum for Dirichlet forms on graphs}, in \cite{LSW}, 181--201.

\bibitem{HKLW} S. Haeseler, M. Keller, D. Lenz, R. Wojciechowski, \emph{Laplacians on infinite graphs: Dirichlet and Neumann boundary conditions}, J. Spectral Theory \textbf{2} (2012), no. 4, 397--432.





\bibitem{JS} N. Jacob, R. L. Schilling, \emph{Towards an $L^p$ Potential Theory for Sub-Markovian Semigroups: Kernels and Capacities}, Acta Mathematica Sinica, English Series, \textbf{22} (2006), no 4, 1227 -- 1250.

\bibitem{Jor} P.E.T. Jorgensen, \emph{Essential self-adjointness of the graph-Laplacian},
J. Math. Phys. \textbf{49} (2008), no. 7, 33.

\bibitem{KKVW} U. Kant, T. Klau\ss, J. Voigt and M. Weber,
\emph{Dirichlet forms for singular one-dimensional operators and on graphs},
J. Evol. Equ. \textbf{9} (2009), no. 4, 637--659.

\bibitem{Kel} M. Keller, \emph{The essential spectrum of the Laplacian on rapidly branching tesselations}, Math. Ann. \textbf{346} (2010), no. 1, 51--66.

\bibitem{KL1} M. Keller, D. Lenz, \emph{Dirichlet forms and stochastic completeness of graphs and subgraphs}, J. Reine Angew. Math. (Crelle's Journal) \textbf{666} (2012), 189 --223.

\bibitem{KL2} M. Keller, D. Lenz, \emph{Unbounded Laplacians on graphs:
basic spectral properties and the heat equation}, Math. Model. Nat. Phenom., \textbf{5} (2010), no. 4, 198--224.

\bibitem{KP} M. Keller, N. Peyerimhoff, \emph{Cheeger constants, growth and spectrum of locally tessellating planar graphs}, Math. Z. \textbf{268} (2011), 871--886.

\bibitem{KostrykinS-99b}
V. ~Kostrykin and R. ~Schrader, \emph{Kirchhoff's rule for quantum wires}, J. Phys. A, \textbf{32} (1999), no. 4, 595--630.

\bibitem{KostrykinS-00b}
V. ~Kostrykin and R. ~Schrader, \emph{Kirchhoff's rule for quantum wires {I}{I}. {T}he inverse problem
with possible applications to quantum computers,} Fortschr. Phys. \textbf{48}(2008), no. 8, 703--716.





\bibitem{Kuchment-04}
P. ~Kuchment, \emph{Quantum graphs. {I}. {S}ome basic structures,} Waves Random Media, \textbf{14} (2004), no. 1, S107--S128.
\newblock Special section on quantum graphs.

\bibitem{Kuchment-05}
P. ~Kuchment, \emph{ Quantum graphs. {II}. {S}ome spectral properties of quantum and combinatorial graphs}, J. Phys. A, \textbf{38}(2005), no. 22, 4887--4900.



\bibitem{LSS} D. Lenz, C. Schubert, P. Stollmann,
\emph{Eigenfunction expansion for Schr\"odinger operators on metric graphs},
Integral Equations and Operator Theory \textbf{62} (2008), no. 4, 541--553.

\bibitem{LSchV} D. Lenz, C. Schubert, I. Veselic, Unbounded quantum graphs with unbounded boundary conditions, preprint, arXiv:1205.1944.


\bibitem{LSW} D. Lenz, F. Sobieczky, W. Woess (eds.) \textit{Random walks, boundaries and spectra}, Progress in Probability, vol. 64, Birkhaeuser, 2011.

\bibitem{LSV} D. Lenz, P. Stollmann, I. Veselic, \emph{Connectedness and irreducibility for perturbations
of Dirichlet forms}. In preparation.



\bibitem{Li} P. Li, \emph{Large time behavior of the heat equation on complete manifolds with non-negative Ricci curvature}, Ann. of Math. (2) \textbf{124} (1986), no. 1, 1--21.

\bibitem{MS} B. Metzger, P. Stollmann, \emph{Heat kernel estimates on weighted
 graphs}, Bull. London Math. Soc. \textbf{32} (2000), no. 4, 477--483.

\bibitem{Nagel} R. Nagel (ed.),
\emph{One-parameter semigroups of positive operators},
Lecture Notes in Mathematics, vol. 1184, Sprin\-ger-Verlag, Berlin, 1986.

\bibitem{Pin1} Y. Pinchover, \emph{Large time behavior of the heat kernel and the behavior of the Green function near criticality for nonsymmetric elliptic operators}, J. Funct. Anal. \textbf{104} (1992), no. 1, 54 -- 70.

\bibitem{Pin2} Y. Pinchover, \emph{Large time behavior of the heat kernel}, J. Funct. Anal. \textbf{206} (2004), no. 1, 191--209.




\bibitem{RS} M. Reed, B. Simon, \emph{Methods of modern mathematical physics IV. Analysis of operators}, Academic Presse, London, 1978.

\bibitem{Sim} B. Simon, \emph{Large time behavior of the heat kernel: On a theorem of Chavel and Karp}, Proc. Amer. Math. Soc. \textbf{118} (1993), no. 2, 513--514.





\bibitem{voi86} J.\;Voigt,
\emph{Absorption semigroups, their generators and Schr\"odinger semigroups,}
J. Funct. Anal. {\bf 67} (1986), no. 2, 167--205.

\bibitem{voi88} J.\;Voigt, \emph{Absorption semigroups,}
J. Operator Theory {\bf 20} (1988), no. 1, 117--131.

\bibitem{Web} A. Weber, \emph{Analysis of the physical Laplacian and the heat flow on
a locally finite graph,} J. Math. Anal. Appl. {\bf370} (2010), no. 1, 146 -- 158.

\bibitem{Web2} A. Weber, \emph{Non-negative solutions of the heat equation on a graph and eigenvalue bounds}, Results Math. {\bf 57} (2010), no. 3-4, 221--228.

\bibitem{Woj1} R. K. Wojciechowski, \emph{Stochastic completeness of graphs}, ProQuest LLC, Ann Arbor, MI, 2008. Thesis (Ph.D.)--City University of New York.

\bibitem{Woj2} R. K. Wojciechowski, \emph{Heat kernel and
essential spectrum of infinite graphs}, Indiana Univ. Math. J. {\bf 58} (2009), no. 3, 1419--1441.

\bibitem{Woj3} R. K. Wojciechowski, \emph{Stochastically incomplete manifolds and graphs}, in: \cite{LSW}.

\end{thebibliography}
\end{document}